\newtheorem{Theorem}{Theorem}[section]
\newtheorem{Lemma}[Theorem]{Lemma}
\newtheorem{Remark}[Theorem]{Remark}
\def\V{\mbox{Var}}
\def\R\re
\def\V{\bf V}
\def \re{{\mathbb R}}
\def \N{{\mathbb N}}
\def \0{\lambda_{0}}
\begin{document}
\hyphenation{Ya-ma-be co-rres-pon-ding hy-po-the-sis iso-pe-ri-me-tric gen-er-at-ed man-i-folds me-tric differ-en-tial in-va-riants}






\title{Isoperimetric estimates in the product of  small and large volume manifolds}



\author[J. M. Ruiz]{Juan Miguel Ruiz$^{\dagger*}$}
 \thanks{$^\dagger$ ENES  UNAM. Departamento  de Matem\'aticas. Le\'on, Gto., M\'exico. mruiz@enes.unam.mx}


\author[A. V. Juarez]{Areli V\'azquez Ju\'arez$^{\ddagger }$}
\thanks{$^\ddagger$ ENES UNAM.  Departamento de Matem\'aticas. Le\'on, Gto., M\'exico. areli@enes.unam.mx.}
\thanks{* Corresponding author.}

\subjclass{ 53C21}

\keywords{Isoperimetric profile, Isoperimetric regions, Symmetrization}

\begin{abstract}

	 Let $(M^m,g)$, $(N^n,h)$ be closed Riemannian manifolds,  $m,n\geq 2$, with concave isoperimetric profiles and volumes $V_M$, $V_N$ respectively. We consider a one parameter family of product manifolds of the same volume, $(X,G_{\lambda})=(M^m\times N^n,\lambda^{2n}g+ \lambda^{-2m}h)$, $\lambda>0$, and  estimate a lower bound for their isoperimetric profile for big $\lambda$. In particular, we show that for $\alpha \in (\frac{3}{4},1)$ and $v_0 \in (0, V_MV_N)$, there is some $\lambda_{0}>0$, such that for $\lambda>\lambda_0$, we can bound  the isoperimetric profile of $(X,G_{\lambda})$:
	 $$		\alpha^{4} f_{M,\lambda}(v_0) \leq	I_{(X,G_{\lambda})}(v_0)\leq  f_{M,\lambda}(v_0)$$

	\noindent	where  $f_{M,\lambda}(v)= \lambda^{-n} V_N I_{(M,g)}(\frac{v}{V_N})$ and $	I_{(M,g)}$ is the isoperimetric profile of $(M,g)$.

	Moreover  	if $(M,g)=(S^m,g_0)$,  the $m-$sphere with the round metric, in this setting, we show that some regions of the type  ${ D^{\lambda}(r)\times N_{\lambda} }$,  are actual isoperimetric regions in $ (S^m\times N^n,\lambda^{2n}g_0+ \lambda^{-2m}h)$ when $\lambda$ is big enough; being $D^{\lambda}(r)$ a disk on $(S^m,\lambda^{2n}g_0)$ and $N_{\lambda}=(N, \lambda^{-2m}h)$.

\end{abstract}

\maketitle

\section{Introduction}

Let  $(M^m,g)$ be a Riemannian manifold of volume $V_M$. The isoperimetric  profile of  $(M,g)$ is the function $I_{(M,g)} : (0,V_M ) \rightarrow [0,\infty )$ defined by,

$$I_{(M,g)} (t) = \inf \{ A(\partial\Omega ) : V(\Omega) = t , \Omega \subset M^m, \ \Omega \text{ \ a \ closed \  region} \},$$

 \noindent where $V(\Omega)$, the volume of a closed region $\Omega \subset M^m$,  denotes the $m-$dimensional Riemannian measure of $\Omega$. Meanwhile,  $A(\partial \Omega)$, the area of its boundary, denotes  the $(m-1)$-dimensional Riemannian measure of the boundary $\partial \Omega$. If a closed region $K\subset M$ realizes the infimum of the isoperimetric profile, it is called an isoperimetric region.   See, for example, \cite{Rito}, \cite{Ros}, for more details on the isoperimetric profile and  the isoperimetric   problem in general.

The study of the isoperimetric profile and of isoperimetric regions is a classical problem in geometry. Nevertheless, the precise  isoperimetric profile is known for few manifolds. Examples include space forms: the euclidean space $\re^n$ with the flat metric, the $n-sphere$ $S^n$ with the round metric and the hyperbolic space,  $\mathbb{H}^n$, with the hyperbolic metric. Also known is the isoperimetric profile of some products of  space forms with one dimensional manifolds: $S^n\times \re$ with the product metric, by the work of R. Pedrosa \cite{Pedro};  products of space forms with one dimensional circles $S^n\times S^1$, $\re^n\times S^1$ and $\mathbb{H}^n\times S^1$, $2\leq n\leq 7$, by the work of Pedrosa and Ritor\'e \cite{PeRit}), among others. We also mention some recent work of C. Viana  on $\re \mathbb P^n$ and space lenses \cite{Viana, Viana2}.
 
   On the other hand, the explicit isoperimetric profiles of seemingly  simple products of compact manifolds with the product metric, like $S^2\times S^2$ or $S^3\times S^2$ are not known. In general, the isoperimetric profile of products is not easy to describe precisely, even if one understands that of each of its factors. 
   
 Some qualitative studies in this direction, which include general lower bounds for the isoperimetric profiles of product manifolds, or characterizations of   isoperimetric regions for products,  have  then been of interest, see for example, the work of F. Morgan \cite{Morgan}, where a general lower bound for the isoperimetric profile of product manifolds is studied. See also \cite{Morgan2}, \cite{Morgan3}, \cite{Pet}, \cite{PetRu1}, \cite{PetRu2}, \cite{RuizVaz}, \cite{RuizVaz2}. 

In the work of  M. Ritor\'e and E.  Vernadakis \cite{RiVer}, it is proven that on a product of a compact manifold and the Euclidean space with the flat metric, $(N^n\times \re^n,g+g_E)$,  regions of the type $N\times D(r)$ are isoperimetric regions for $r>r_0$, for some $r_0>0$ big enough; being $D(r)\subset \re^n$ a  ball of radius $r$. See also the work of J. Gonzalo \cite{Gonz}. In this case, it is of notice that for big volumes, the isoperimetric profile of $(N^n\times \re^n,g+g_E)$ behaves like  the isoperimetric profile of the factor $(\re^n,g_E)$. Following this lead, one may ponder if this is still the case for general products of  very small and  very big volume manifolds.

In this work, we study the isoperimetric profile of products of compact manifolds $(M^m,g)$ and $(N^n,h)$. In particular, we consider a one parameter family of product manifolds with the same volume, $(X,G_{\lambda})=(M^m\times N^n,\lambda^{2n}g+ \lambda^{-2m}h)$, $\lambda>0$,  and study their isoperimetric profile for big values of $\lambda$, through some   ideas developed by F. Morgan \cite{Morgan} and A. Ros \cite{Ros}.

 Moreover,  in our setting, if  one of the factors is the $m-sphere$ with the round metric, we are able to describe some isoperimetric regions, for $\lambda$ big, following the techinques in the work of M. Ritoré and E. Vernadakis in \cite{RiVer}.




Our first result  estimates a lower bound for the isoperimetric profile $(M^m\times N^n,\lambda^{2n}g+ \lambda^{-2m}h)$ for large  values of $\lambda$.

\begin{Theorem}
	\label{Thm1}
  Let $(M^m,g)$, $(N^n,h)$ be compact Riemannian manifolds without boundary,  $m,n\geq 2$, concave isoperimetric profiles and volumes $V_M$, $V_N$ respectively. Let $(X,G_{\lambda})=(M^m\times N^n,\lambda^{2n}g+ \lambda^{-2m}h)$.	Let $\alpha \in (\frac{3}{4},1)$. Then, for $v_0 \in (0, V_MV_N)$, there is some $\lambda_{0}$, such that for $\lambda>\lambda_0$ 
	
	\begin{equation}
		\label{alpha}
\alpha^{4} f_{M,\lambda}(v_0) \leq	I_{(X,G_{\lambda})}(v_0)\leq  f_{M,\lambda}(v_0)
	\end{equation}
	
\noindent	where  $f_{M,\lambda}(v)= \lambda^{-n} V_N I_{(M,g)}(\frac{v}{V_N})$.
\end{Theorem}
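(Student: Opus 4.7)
The plan is to establish the upper bound by a direct product construction and the lower bound by a two-direction slicing and concentration argument in the spirit of the cited work of Morgan and Ros. For the \emph{upper bound}, take $\Omega_M\subset M$ with $V_g(\Omega_M)=v_0/V_N$ and $A_g(\partial\Omega_M)$ close to $I_{(M,g)}(v_0/V_N)$, and consider the product region $\Omega_M\times N\subset X$. A direct scaling calculation using the conformal factors $\lambda^{2n}$ on $M$ and $\lambda^{-2m}$ on $N$ shows that this region has $G_\lambda$-volume exactly $v_0$ and boundary area exactly $\lambda^{-n}V_N I_{(M,g)}(v_0/V_N)=f_{M,\lambda}(v_0)$, so $I_{(X,G_\lambda)}(v_0)\leq f_{M,\lambda}(v_0)$.

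For the \emph{lower bound}, let $\Omega\subset X$ be any region of $G_\lambda$-volume $v_0$; one may assume $A_{G_\lambda}(\partial\Omega)\leq f_{M,\lambda}(v_0)$, else there is nothing to prove. For $p\in M$ and $q\in N$ introduce the fibers $\Omega_p=\{q\in N:(p,q)\in\Omega\}$, $M_q=\{p\in M:(p,q)\in\Omega\}$, and set $u(p)=V_h(\Omega_p)$. Applying the coarea formula to $\pi_M|_{\partial\Omega}$ together with the fiberwise isoperimetric inequality in $(N,\lambda^{-2m}h)$ gives
$$A_{G_\lambda}(\partial\Omega)\geq \lambda^m\int_M I_{(N,h)}(u(p))\,dV_g(p).$$
The right-hand side grows like $\lambda^m$ while the assumed upper bound on $A_{G_\lambda}(\partial\Omega)$ decays like $\lambda^{-n}$, so for any fixed $\delta>0$ the set of ``partial fibers'' $E_*=\{p:\delta\leq u(p)\leq V_N-\delta\}$, on which $I_{(N,h)}(u)$ is bounded below by some $c_\delta>0$, must satisfy $|E_*|_g=O(\lambda^{-m-n})$. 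Setting $E_1=\{u>V_N-\delta\}$, a straightforward volume balance then forces $|E_1|_g\to v_0/V_N$ as $\delta\to 0^+$ and $\lambda\to\infty$.

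Applying the symmetric coarea argument with $\pi_N|_{\partial\Omega}$ gives the complementary bound
$$A_{G_\lambda}(\partial\Omega)\geq \lambda^{-n}\int_N I_{(M,g)}(V_g(M_q))\,dV_h(q),$$
and the Fubini identity $\int_N |M_q\,\triangle\,E_1|_g\,dV_h\leq \delta V_M+V_N|E_*|_g$ shows that $V_g(M_q)$ is close to $v_0/V_N$ for most $q\in N$. Since $I_{(M,g)}$ is concave and hence continuous on $(0,V_M)$, for $\lambda$ large one has $I_{(M,g)}(V_g(M_q))\geq (1-\varepsilon)I_{(M,g)}(v_0/V_N)$ on a set of $q$'s of $V_h$-measure at least $(1-\varepsilon)V_N$. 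Four compounded factors of at least $\alpha$, corresponding to (i) the ``bad'' set of $q$, (ii) the deviation of $V_g(M_q)$ from $v_0/V_N$, (iii) the deviation of $|E_1|_g$ from $v_0/V_N$, and (iv) an overall safety margin, then yield $A_{G_\lambda}(\partial\Omega)\geq \alpha^4 f_{M,\lambda}(v_0)$.

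The hard part will be coordinating the thresholds $\delta$ and $\varepsilon$ with the parameter $\lambda_0$ so that all four compounded loss factors can simultaneously be made larger than any prescribed $\alpha>3/4$; this compounding is what accounts for the quartic exponent in the statement. The hypothesis $v_0\in(0,V_MV_N)$ keeps $v_0/V_N$ in the interior of $(0,V_M)$, which is the range where the continuity modulus of the concave profile $I_{(M,g)}$ can be controlled uniformly.
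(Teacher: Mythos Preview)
Your two slicing inequalities are exactly the paper's starting point (its equations derived from the Ros product theorem, labeled (\ref{varphi}) and (\ref{psi}) there), so up to that point the approaches coincide. The divergence is in how the lower bound is extracted from them. The paper first symmetrizes via Ros's theorem into a two-dimensional model strip $(0,V_M)\times(0,V_N)$, where the competitor becomes a region $\Omega_\lambda$ bounded by a single monotone curve; it then runs an explicit case analysis on where this curve crosses the thresholds $x=\alpha t_0$, $x=V_M-\alpha t_0$, $y=\alpha V_N$, $y=(1-\alpha)V_N$. Two of the three cases force $\lambda^{m+n}$ to be bounded and are therefore impossible for large $\lambda$; the remaining case yields directly $I_{(X,G_\lambda)}(v_0)\geq \lambda^{-n}V_N\,\alpha(2\alpha-1)\,\varphi(t_0)$, and the hypothesis $\alpha>\tfrac34$ is precisely the condition making $\alpha(2\alpha-1)>\alpha^4$. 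Your route --- bounding the measure of ``partial'' $N$-fibers via the $\lambda^m$-inequality, then a Fubini/Chebyshev step to control $V_g(M_q)$ for most $q$ --- is a legitimate alternative and arguably more direct, but it does not naturally output the factor $\alpha^4$ nor explain the role of $\tfrac34$: your sketch really produces only two genuine loss factors (the bad-$q$ set and the profile continuity), so items (iii) and (iv) in your list are padding rather than independent losses. Carried out carefully, your method in fact gives $A_{G_\lambda}(\partial\Omega)\geq (1-\varepsilon)^2 f_{M,\lambda}(v_0)$ for any fixed $\varepsilon>0$ once $\lambda$ is large, which is \emph{stronger} than the stated bound and works for every $\alpha\in(0,1)$. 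One small omission: you should first reduce to $v_0\leq \tfrac12 V_MV_N$ via the symmetry $I(v)=I(V_MV_N-v)$, as the paper does, so that $v_0/V_N$ sits in the increasing half of $I_{(M,g)}$.
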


This is, for big $\lambda$, the area of the boundary of a region of the type $ K^{\lambda} \times N_{\lambda}\subset (X,G_{\lambda}) $, gets close to the value of the area of the boundary of an isoperimetric region of the same  volume as  $ K^{\lambda} \times N_{\lambda}$, where $K^{\lambda}$ is an isoperimetric region of $(M,\lambda^{2n} g)$ and  $N_{\lambda}=(N, \lambda^{-2m}h)$. The closer to 1 one chooses $\alpha$ to be, the closer the bound is to the isoperimetric profile, although $\lambda_0$ may need to get bigger. Note that the upper bound of equation (\ref{alpha}) is trivial since it is precisely the area of the boundary of a region of the type $ K^{\lambda} \times N_{\lambda}$, described above.

Moreover,  	if the first factor is the $m-$sphere with the round metric, $(M,g)=(S^m,g_0)$,   then in the same setting $(X,G_{\lambda})=(S^m\times N^n,\lambda^{2n}g_0+ \lambda^{-2m}h)$, we show that there are regions of the type  ${ D^{\lambda}(r)\times N_{\lambda} }$ that are actual isoperimetric regions when $\lambda$ is big enough; being $D^{\lambda}(r)$ a disk in $(S^m,\lambda^{2n}g_0)$ and $N_{\lambda}=(N, \lambda^{-2m}h)$.

 \begin{Theorem}
 	\label{Thm2}
 	Let $m,n\geq 2$. Let $(N^n,h)$ be a compact Riemannian manifold without boundary and concave isoperimetric profile. Let $(S^m,g_0)$ be the $m-$sphere with the round metric and radius 1.  Let $V_M$ and $V_N$ denote the volumes of $(S^m,g_0)$ and $(N^n,h)$, respectively. Then, for $v_0 \in (0, V_M V_N)$, $v_0\neq \frac{V_NV_M}{2} $, there is some $\lambda_{0}$ such that for $\lambda>\lambda_0$, ${ D^{\lambda}(r_{\lambda}) \times N_{\lambda}}$ is an isoperimetric region in	
 	$(S^m\times N^n,\lambda^{2n}g_0+ \lambda^{-2m}h)$. $N_{\lambda}=(N,\lambda^{-2m}h)$ and $D^{\lambda}(r_{\lambda})$ is a geodesic ball in $(S^m,\lambda^{2n}g_0)$ of radius $r_{\lambda}$, with $r_{\lambda}>0$, such that $V({ D^{\lambda}(r_{\lambda}) \times N_{\lambda}})=v_0$.

 \end{Theorem}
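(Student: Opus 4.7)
The plan is to establish that for $\lambda$ sufficiently large, every isoperimetric region in $(X,G_\lambda)$ of volume $v_0$ must be of product form $A_\lambda \times N$ with $A_\lambda \subset S^m$, and then to identify $A_\lambda$ as a geodesic ball via the classical uniqueness of isoperimetric regions on the round sphere. First I would record existence and regularity: by compactness of $X$ together with standard geometric measure theory (Almgren), for every $\lambda > 0$ there is an isoperimetric minimizer $\Omega_\lambda \subset (X,G_\lambda)$ of volume $v_0$, whose boundary is a smooth constant-mean-curvature hypersurface outside a singular set of Hausdorff dimension at most $m+n-8$. A direct computation using the scaling rules $V_{M_\lambda}=\lambda^{mn}V_{g_0}$ and $A_{M_\lambda}=\lambda^{n(m-1)}A_{g_0}$ confirms that the candidate $C_\lambda := D^\lambda(r_\lambda)\times N_\lambda$ has boundary area exactly $f_{M,\lambda}(v_0)$, so Theorem~\ref{Thm1} yields
$$\alpha^{4} f_{M,\lambda}(v_0) \ \leq\ A(\partial \Omega_\lambda) \ \leq\ f_{M,\lambda}(v_0)\qquad \mbox{for all } \lambda > \lambda_0(\alpha).$$

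The heart of the argument is to upgrade this pinching to exact equality realized by a product region. For each $p \in S^m$ introduce the slicing function $\phi_\lambda(p)=V_{N_\lambda}(\Omega_\lambda \cap (\{p\}\times N)) \in [0,V_{N_\lambda}]$, which satisfies $\int_{S^m}\phi_\lambda\, dV_{M_\lambda}=v_0$. The coarea inequality for the Riemannian submersion $\pi_M\colon (X,G_\lambda)\to (S^m,\lambda^{2n}g_0)$ gives
$$A(\partial\Omega_\lambda) \ \geq\ \int_{S^m} P_{N_\lambda}(\Omega_\lambda\cap (\{p\}\times N))\, dV_{M_\lambda}(p).$$
Split $S^m$ into a ``middle'' transition region $T_\epsilon := \{p : \epsilon V_{N_\lambda} \leq \phi_\lambda(p)\leq (1-\epsilon)V_{N_\lambda}\}$ and ``thin'' transition regions $T_0^\epsilon$, $T_1^\epsilon$ where $\phi_\lambda$ is close to $0$ or $V_{N_\lambda}$. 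On $T_\epsilon$, the concavity of $I_{(N,h)}$ gives a uniform lower bound $P_{N_\lambda}(\Omega_p) \geq c_\epsilon\, \lambda^{-m(n-1)}$; on $T_0^\epsilon \cup T_1^\epsilon$ the small-volume asymptotic $I_{(N,h)}(v) \geq C v^{(n-1)/n}$ controls the contribution. Comparing with the upper bound $f_{M,\lambda}(v_0) = O(\lambda^{-n})$ forces both $V_{g_0}(T_\epsilon)\to 0$ and the total volume carried by partial slices to vanish as $\lambda \to \infty$, so $\Omega_\lambda$ is asymptotically of product form.

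To upgrade the asymptotic product structure to exact equality $\Omega_\lambda = A_\lambda \times N$, I would adapt the rearrangement–comparison argument of Ritor\'e–Vernadakis \cite{RiVer}: using the smooth CMC regularity of $\partial \Omega_\lambda$, any remaining partial slice can be replaced by a full or empty slice, compensated by a small variation of the $M$-boundary, yielding a strictly smaller-perimeter competitor unless $\Omega_\lambda$ is already a product. Once $\Omega_\lambda = A_\lambda\times N$, minimality together with the identity $A(\partial\Omega_\lambda) = A_{M_\lambda}(\partial A_\lambda)\cdot V_{N_\lambda}$ forces $A_\lambda$ to be an isoperimetric region of $(S^m,\lambda^{2n}g_0)$ of volume $v_0/V_{N_\lambda}\neq V_{M_\lambda}/2$; the hypothesis $v_0 \neq V_MV_N/2$ is exactly what activates the classical uniqueness of isoperimetric regions on the round sphere, giving $A_\lambda = D^\lambda(r_\lambda)$.

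The principal obstacle is the rigidity step of the third paragraph: converting the asymptotic product structure obtained from the slicing–isoperimetric estimates into exact equality requires the full strength of the CMC regularity of $\partial\Omega_\lambda$ combined with a careful variational comparison tailored to the collapsing-fiber geometry of $N_\lambda$, and this is where the Ritor\'e–Vernadakis machinery is essential.
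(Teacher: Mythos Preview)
Your slicing argument in the second paragraph is correct and does show that the $g_0$-measure of the transition region $T_\epsilon$ is $O(\lambda^{-(m+n)})$; this is essentially an $L^1$ approximation of $\Omega_\lambda$ by a product. The genuine gap is the third paragraph. The sentence ``any remaining partial slice can be replaced by a full or empty slice, compensated by a small variation of the $M$-boundary, yielding a strictly smaller-perimeter competitor'' is not an argument: you have not explained how to perform such a replacement while keeping the volume fixed and \emph{strictly} decreasing perimeter, and there is no such direct rearrangement lemma in \cite{RiVer}. Without this, you only know $\Omega_\lambda$ is $L^1$-close to a product, which by itself does not force equality with a cylinder.

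The paper closes this gap by a different route that you have not reproduced. First, because geodesic balls are isoperimetric on $(S^m,g_0)$, one may Schwarz-symmetrize any isoperimetric region with respect to the $S^m$-factor without increasing perimeter, obtaining an $O(m)$-invariant minimizer $K^\lambda$. For such symmetrized regions one proves (i) $L^1$-convergence of $T_\lambda^{-1}(K^\lambda)$ to the cylinder $B_{x_N}(r_0)$; (ii) clearing of a neighborhood of the antipode and monotonicity of $I_{X_\lambda}$ on a suitable interval; (iii) density estimates \`a la \cite{RiVer,LeoRig} that upgrade $L^1$-convergence to Hausdorff convergence of the boundaries. Finally, one computes that for large $\lambda$ the cylinder boundary $\partial D^\lambda(\lambda^n r_0)\times N_\lambda$ is strictly $O(m)$-stable (the condition is $\lambda^{2(m+n)}>(m-1)\csc^2(r_0)/\mu_1(N)$), and then invokes the White/Grosse-Brauckmann strong minimax property: a strictly stable CMC hypersurface is the unique area minimizer among nearby $O(m)$-invariant competitors of the same volume. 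Hausdorff closeness plus strict stability then forces $K^\lambda$ to equal the cylinder exactly. Your proposal omits the symmetrization step, the density/Hausdorff step, and the stability step, and these are precisely what turn ``asymptotically product'' into ``equal to a cylinder''.

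A minor point: your explanation of the hypothesis $v_0\neq \tfrac{V_MV_N}{2}$ is incorrect. Isoperimetric regions on the round sphere are geodesic balls for \emph{every} volume, including half-volume, so uniqueness on $S^m$ is not the issue. The exclusion of $v_0=\tfrac{V_MV_N}{2}$ arises in the density-estimate step, where one needs a slightly larger volume $v_1=v_0+V_N H^m(D(R_0))$ to still lie in $(0,\tfrac{V_MV_N}{2})$ so that the monotonicity lemma applies.
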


\textbf{Acknowledgments.}
The authors were supported by grant UNAM-DGAPA-PAPIIT IN108824 and grant UNAM-DGAPA-PAPIIT IN108224, respectively. The authors would like to thank Professors Detang Zhou and Xu Cheng for their hospitality at Universidade Federal Fluminense, Niteroi, Brazil,  where part of this work was done. We would also like to thank Professor Frank Morgan for his generosity on sharing some problems and conjectures related to the subject on this document.

\section{Notation and background}

Let $(M^m,g)$ and $(N^n,h)$ be closed manifolds (compact without boundary), we will denote by $(X,G_{\lambda})$  the one parameter family of products $(X,G_{\lambda})= (M^m\times N^n, \lambda^{2n} g+ \lambda^{-2m}h)$. We will also write $G=G_1$ and $X_{\lambda}=(X,G_{\lambda})$ when convenient.


 Given a measurable set $E\subset X$, we denote its volume $V(E)=Vol^{n+m}(E)$ and the area of its boundary $A(\partial E)=Vol^{n+m-1}(\partial E)$. In general, we denote by $H^k(E)$ the $k-dimensional$ Hausdorff measure of a set $E$. 
 
Existence and regularity of isoperimetric regions is a fundamental result due to the works of Almgren \cite{Almgren}, Gr\"uter \cite{Gruter}, Gonzalez, Massari, Tamanini \cite{Gonzalez}, (see also Morgan \cite{MorganBook}, Ros \cite{Ros}).

\begin{Theorem}
	\label{existence}
	Let $M^m$ be a compact Riemannian manifold, or non-compact with $M/G$ compact, being $G$ the
	isometry group of $M$. Then, for any $t$, $0 < t < V(M)$, there exists a
	compact region $\Omega \subset M$, whose boundary $\Sigma = \partial \Omega$ minimizes area among regions of
	volume $t$. Moreover, except for a closed singular set of Hausdorff dimension at most
	$m - 8$, the boundary $\Sigma$ of any minimizing region is a smooth embedded hypersurface
	with constant mean curvature.
	
\end{Theorem}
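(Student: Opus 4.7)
The plan is to strengthen the lower bound $\alpha^{4}f_{M,\lambda}(v_{0})$ from Theorem \ref{Thm1} to the sharp value $f_{M,\lambda}(v_{0})$ by exploiting the uniqueness (up to isometry) of spherical isoperimetric regions. By Theorem \ref{existence}, for each $\lambda$ there exists a compact isoperimetric region $\Omega_{\lambda}\subset X_{\lambda}$ of volume $v_{0}$, with boundary $\Sigma_{\lambda}=\partial\Omega_{\lambda}$ a smooth CMC hypersurface outside a small singular set. Since the candidate $D^{\lambda}(r_{\lambda})\times N_{\lambda}$ realizes the upper bound $f_{M,\lambda}(v_{0})$, it suffices to show that $I_{X_{\lambda}}(v_{0})=f_{M,\lambda}(v_{0})$, equivalently $A(\Sigma_{\lambda})\geq f_{M,\lambda}(v_{0})$, for $\lambda$ sufficiently large.

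First I would establish an approximate product structure via slicing. Define the fiber-volume function $u_{\lambda}\colon S^{m}\to[0,V(N_{\lambda})]$ by $u_{\lambda}(x)=V(\Omega_{\lambda}\cap(\{x\}\times N_{\lambda});N_{\lambda})$. The coarea formula applied to the projection $\pi_{S^{m}}\colon\Sigma_{\lambda}\to(S^{m},\lambda^{2n}g_{0})$, together with the isoperimetric inequality on the fibers $(N,\lambda^{-2m}h)$, yields
\[ A(\Sigma_{\lambda})\;\geq\;\int_{S^{m}}I_{N_{\lambda}}\bigl(u_{\lambda}(x)\bigr)\,dV_{(S^{m},\lambda^{2n}g_{0})}(x). \]
Using the scaling identity $I_{N_{\lambda}}(v)=\lambda^{-m(n-1)}I_{(N,h)}(\lambda^{mn}v)$ and the upper bound $A(\Sigma_{\lambda})\leq f_{M,\lambda}(v_{0})=O(\lambda^{-n})$, a direct change of variable forces the normalized fiber density $\tilde{u}_{\lambda}:=u_{\lambda}/V(N_{\lambda})$ to concentrate on the extremes $\{0,1\}$ outside a set of vanishing relative measure in $(S^{m},\lambda^{2n}g_{0})$. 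Consequently $\Omega_{\lambda}$ is close in $L^{1}$ to a cylindrical set $E_{\lambda}\times N_{\lambda}$, with $V(E_{\lambda};g_{0})\to v_{0}/V_{N}$ as $\lambda\to\infty$.

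The main obstacle is upgrading this approximate product structure to an exact one, that is, showing $\Omega_{\lambda}=E_{\lambda}\times N_{\lambda}$ for $\lambda$ sufficiently large. Following the framework of Ritor\'e--Vernadakis \cite{RiVer}, my plan is to combine: (i) the regularity and uniform CMC bounds on $\Sigma_{\lambda}$ inherited from Theorem \ref{existence}, with mean curvature $H_{\lambda}$ that vanishes at rate $\lambda^{-n}$; (ii) a compactness/blow-down argument in which, after suitable rescaling, $\Sigma_{\lambda}$ subconverges in $C^{k,\alpha}$ to a product CMC hypersurface $\partial D(r_{0})\times N$ based on a geodesic sphere of $(S^{m},g_{0})$; and (iii) a rigidity step showing that in the thin-product geometry of $X_{\lambda}$, any CMC hypersurface $C^{0}$-close to a product is itself a product. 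The hypothesis $v_{0}\neq V_{M}V_{N}/2$ enters step (iii) to avoid the degenerate case where $D(r_{0})$ is a hemisphere and $H_{\lambda}\to 0$ identically; in that situation the $\mathbb{Z}/2$ symmetry exchanging the ball with its complement introduces additional zero modes in the linearized CMC operator and breaks the uniqueness argument. Once the exact product structure $\Omega_{\lambda}=E_{\lambda}\times N_{\lambda}$ is established, the area factorization $A(\partial(E_{\lambda}\times N_{\lambda});G_{\lambda})=A(\partial E_{\lambda};\lambda^{2n}g_{0})\cdot V(N_{\lambda})$ combined with the isoperimetric property of $\Omega_{\lambda}$ forces $E_{\lambda}$ to be an isoperimetric region in $(S^{m},\lambda^{2n}g_{0})$; by the classical sphere isoperimetric theorem $E_{\lambda}$ is a geodesic ball, necessarily $D^{\lambda}(r_{\lambda})$ by the volume constraint, completing the proof.
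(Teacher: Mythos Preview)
Your proposal does not address the stated theorem at all. Theorem~\ref{existence} is the classical existence and regularity result for isoperimetric regions in compact (or cocompact) Riemannian manifolds; the paper does not prove it but merely quotes it from the literature (Almgren \cite{Almgren}, Gr\"uter \cite{Gruter}, Gonzalez--Massari--Tamanini \cite{Gonzalez}, Morgan \cite{MorganBook}, Ros \cite{Ros}). What you have written is instead a sketch of an argument for Theorem~\ref{Thm2}, and indeed you invoke Theorem~\ref{existence} as an ingredient rather than proving it. There is therefore nothing to compare: the paper offers no proof of Theorem~\ref{existence}, and neither do you.

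As a side remark on your sketch viewed as an approach to Theorem~\ref{Thm2}: the overall architecture (slicing to get $L^{1}$ closeness to a cylinder, then upgrading to exact product structure) is in the same spirit as the paper's proof, which follows Ritor\'e--Vernadakis closely. However, the paper carries this out quite differently in detail: it first uses the Ros--Morgan model-metric symmetrization to get the $L^{1}$ convergence (Lemma~\ref{L1}), then promotes this to Hausdorff convergence of the boundaries via density estimates (Lemmas~\ref{emptyend}--\ref{Hausdorff}), and finally concludes not by a $C^{k,\alpha}$ compactness/rigidity argument for CMC hypersurfaces as you propose, but by the strict $O(m)$-stability of the cylinder boundary (Lemma~\ref{stable}) combined with the White/Grosse-Brauckmann minimax principle (Theorem~\ref{Thm3}). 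Your step (iii) --- showing that any CMC hypersurface $C^{0}$-close to a product in the thin geometry is itself a product --- is plausible but is precisely the hard analytic step, and you have not indicated how to control the Jacobi operator uniformly in $\lambda$; the paper sidesteps this by working at a single fixed $\lambda_{0}$ where strict stability holds and transporting everything back via $T_{\lambda}$.
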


The following  propierties of the isoperimetric profile are well known. 

\begin{Lemma}\label{homog}(\cite{Bayle} p. 20)
	
Let $(M^m,g)$ be a closed manifold of volume $V_M$, $m\geq2$. For all $\lambda>0$,

$$\forall v \in [0,V_M], \  \ \ I_{(M,\lambda^2 g)}(v)=\lambda^{m-1} I_{(M,g)}\left(\frac{v}{\lambda^m}\right)$$
		
\end{Lemma}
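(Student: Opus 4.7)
The plan is to reduce this to the elementary fact that rescaling a Riemannian metric by $g \mapsto \lambda^{2} g$ rescales the $k$-dimensional Hausdorff measure of any set by the factor $\lambda^{k}$. First I would verify this locally: in any chart, the volume form of $\lambda^{2} g$ equals $\lambda^{m}$ times the volume form of $g$, so $V_{\lambda^{2} g}(\Omega) = \lambda^{m} V_{g}(\Omega)$ for every measurable $\Omega \subset M$. By the same token, the induced $(m-1)$-dimensional Riemannian measure on a hypersurface scales as $\lambda^{m-1}$, whence $A_{\lambda^{2} g}(\partial \Omega) = \lambda^{m-1} A_{g}(\partial \Omega)$.

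With these scaling formulas in hand, the rest is a change-of-variable under the infimum defining the isoperimetric profile. I would observe that the identity map on $M$ gives a bijection between closed regions $\Omega$ of $(M, \lambda^{2} g)$ with $V_{\lambda^{2} g}(\Omega) = v$ and closed regions of $(M, g)$ with $V_{g}(\Omega) = v / \lambda^{m}$, simply because the two volume constraints are equivalent by the formula above. Along this bijection the boundary area is multiplied by $\lambda^{m-1}$.

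Taking the infimum of $A_{\lambda^{2} g}(\partial \Omega)$ over the left family therefore equals $\lambda^{m-1}$ times the infimum of $A_{g}(\partial \Omega)$ over the right family, which by definition gives
\begin{equation*}
I_{(M, \lambda^{2} g)}(v) \;=\; \lambda^{m-1}\, I_{(M, g)}\!\left(\tfrac{v}{\lambda^{m}}\right),
\end{equation*}
as desired. The range $v \in [0, V_{M}]$ is preserved under the correspondence since $V_{\lambda^{2} g}(M) = \lambda^{m} V_{M}$, so $v/\lambda^{m} \in [0, V_{M}]$ whenever $v \in [0, V_{\lambda^{2} g}(M)]$.

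There is no real obstacle here; the statement is essentially a dimensional-analysis identity. The only point requiring a slight care is confirming that the scaling of the $(m-1)$-dimensional measure of $\partial \Omega$ really is $\lambda^{m-1}$ for the boundaries relevant to the infimum (which by Theorem~\ref{existence} are smooth off a set of Hausdorff dimension at most $m-8$), but this is immediate from the smooth scaling on the regular part together with the vanishing of $(m-1)$-Hausdorff measure on the singular set.
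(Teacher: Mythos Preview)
Your argument is correct and is exactly the standard scaling computation; the paper itself gives no proof and simply cites Bayle's thesis, so there is nothing to compare against. One small remark: your final paragraph invoking Theorem~\ref{existence} is unnecessary, since the $(m-1)$-dimensional Hausdorff measure scales by $\lambda^{m-1}$ for \emph{every} subset (diameters scale by $\lambda$), not just for boundaries of minimizers --- and indeed the infimum in the definition of $I$ is taken over all closed regions, not only the isoperimetric ones, so the scaling must be checked on the full competitor class anyway.
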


\begin{Lemma}\label{symm}(Lemma 3.4 in \cite{Rito})
	
	Let $(M^m,g)$ be a closed manifold of volume $V_M$, $m\geq2$. Then
	
	$$\forall v \in [0,V_M], \  \ \ I_{(M, g)}(v)=I_{(M,g)}(V_M-v)$$
	
\end{Lemma}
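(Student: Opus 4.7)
The plan is to exploit the involution $\Omega \mapsto M \setminus \Omega$, which on a closed manifold swaps a region of volume $v$ with a region of volume $V_M - v$ while preserving the measure of the shared boundary. This is really the only ingredient needed.

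First I would fix $v \in [0, V_M]$ and let $\Omega \subset M$ be an arbitrary closed region with $V(\Omega) = v$. Define $\Omega^c$ to be the closure of $M \setminus \Omega$. Because $M$ has no boundary, the topological boundary of $\Omega$ and of $\Omega^c$ coincide as subsets of $M$, that is $\partial \Omega = \partial(M \setminus \Omega)$. Hence $\Omega^c$ is itself a closed region, with
$$V(\Omega^c) = V_M - v, \qquad A(\partial \Omega^c) = H^{m-1}(\partial \Omega) = A(\partial \Omega).$$

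Consequently $\Omega \mapsto \Omega^c$ is a volume-swapping, area-preserving bijection between closed regions of volume $v$ and closed regions of volume $V_M - v$. Taking the infimum of $A(\partial \Omega)$ on both sides of this correspondence yields $I_{(M,g)}(v) = I_{(M,g)}(V_M - v)$, which is the claim.

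There is essentially no obstacle beyond being careful about definitions. If one prefers the framework of Caccioppoli sets, the same argument runs verbatim from the classical fact that the perimeter of a finite-perimeter set equals the perimeter of its complement. The hypothesis that $M$ is closed (i.e.\ $\partial M = \emptyset$) is exactly what is used: otherwise $\partial \Omega^c$ could inherit a contribution from $\partial M$ and inflate the boundary area, breaking the symmetry. No regularity or curvature hypothesis on $(M,g)$ is required, and the conclusion is independent of whether the infimum is attained.
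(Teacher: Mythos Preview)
Your argument is correct and is exactly the standard one: on a closed manifold the complement of a region has the same topological (or reduced) boundary and complementary volume, so the infimum defining the profile is symmetric about $V_M/2$. The paper itself does not supply a proof of this lemma---it is simply quoted from \cite{Rito}---so there is nothing further to compare.
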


\begin{Lemma}\label{c0}
	
Let $(M^m,g)$ be a closed manifold of volume $V_M$, $m\geq2$. For $v_0 \in (0,V_M)$, there is a constant $\kappa_0=\kappa_0(v_0)>0$ so that, 	for $v \in (0,v_0)$, $$I(v)\geq \kappa_0 \ v^{\frac{m-1}{m}}.$$

\end{Lemma}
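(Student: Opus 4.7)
The plan is to prove the bound separately in a neighborhood of $0$ and on a compact interval bounded away from $0$, then combine both estimates into a single constant $\kappa_0$.

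First, I would establish the bound in the small-volume regime: there exist $\delta>0$ and $c_1>0$ such that $I(v)\geq c_1\, v^{(m-1)/m}$ for every $v\in(0,\delta]$. This is the classical local-Euclidean asymptotic of the isoperimetric profile on a smooth closed Riemannian manifold. Since $(M,g)$ is smooth and compact, its sectional and Ricci curvatures are bounded, injectivity radius is positive, and small geodesic balls are arbitrarily close (in the $C^2$ sense) to Euclidean balls. By the well-known expansion $I_{(M,g)}(v)=c_m\, v^{(m-1)/m}\bigl(1+O(v^{2/m})\bigr)$ as $v\to 0^+$, with $c_m$ the Euclidean isoperimetric constant (see for instance B\'erard--Meyer or the references \cite{Ros}, \cite{Rito} cited in the paper), one obtains such a $\delta$ and $c_1$ directly.

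Next, I would handle the middle range $v\in[\delta,v_0]$ by compactness. The isoperimetric profile $I_{(M,g)}$ is continuous on $(0,V_M)$ (a standard consequence of the existence and regularity Theorem \ref{existence} together with the continuity arguments in \cite{Rito}), and it is strictly positive on $(0,V_M)$: indeed, $I(v)=0$ would force a region with zero-area boundary, which on a connected closed manifold implies volume $0$ or $V_M$. Hence $I$ attains a positive minimum $c_2>0$ on the compact set $[\delta,v_0]$.

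Finally, I would combine the two estimates. For $v\in[\delta,v_0]$,
\[
I(v)\geq c_2 \;=\; \frac{c_2}{v_0^{(m-1)/m}}\, v_0^{(m-1)/m} \;\geq\; \frac{c_2}{v_0^{(m-1)/m}}\, v^{(m-1)/m},
\]
so setting
\[
\kappa_0(v_0)\;\df\;\min\!\left\{c_1,\;\frac{c_2}{v_0^{(m-1)/m}}\right\}>0
\]
yields $I(v)\geq \kappa_0\, v^{(m-1)/m}$ for every $v\in(0,v_0)$. The main obstacle is justifying the small-volume asymptotic in the first step; I expect to simply cite it from the standard references, since a self-contained derivation via normal coordinates and Bishop--Gromov is routine but tangential to the paper's focus. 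The continuity and positivity of $I$, and the compactness argument, are then purely bookkeeping.
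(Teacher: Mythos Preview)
Your argument is correct. The paper, however, does not actually prove Lemma~\ref{c0}: it is listed among the preliminary ``well known'' properties of the isoperimetric profile (alongside Lemmas~\ref{homog}, \ref{symm}, and \ref{a0}) and is stated without proof or explicit reference. So there is no paper proof to compare against; what you have written is precisely the standard justification one would expect for such a background lemma, namely the B\'erard--Meyer small-volume asymptotic combined with continuity and strict positivity of $I_{(M,g)}$ on a compact subinterval of $(0,V_M)$.
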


\begin{Lemma}\text{(cf. in \cite{Duzaar})}\label{a0}
	
	Let $(M^m,g)$ be a closed manifold of volume $V_M$, $m\geq2$. For $v_0 \in (0,V_M)$, there is a constant $a_0=a_0(v_0)>0$ such that, 	$$H^{m-1}(\partial E)\geq a_0 \ H^m(E),$$
	\noindent for any set $E\subset M$, such  that $H^m(E) \in (0,v_0)$, 
	
\end{Lemma}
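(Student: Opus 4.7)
The plan is to obtain Lemma \ref{a0} as an immediate quantitative consequence of Lemma \ref{c0}. Given $E\subset M$ with $H^m(E)=v\in(0,v_0)$, the definition of the isoperimetric profile gives $H^{m-1}(\partial E)\geq I_{(M,g)}(v)$, and then Lemma \ref{c0} yields
\[
H^{m-1}(\partial E)\;\geq\;\kappa_0\,v^{(m-1)/m}\;=\;\kappa_0\,v^{-1/m}\cdot v.
\]
Since $v\leq v_0$, the factor $v^{-1/m}$ is bounded below by $v_0^{-1/m}$, so
\[
H^{m-1}(\partial E)\;\geq\;\kappa_0\,v_0^{-1/m}\,H^m(E),
\]
and it suffices to take $a_0(v_0):=\kappa_0(v_0)\,v_0^{-1/m}$.

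Because the derivation reduces to two lines, there is no genuine technical obstacle at this step; all the analytic content is packaged inside Lemma \ref{c0}, which in turn rests on the Euclidean small-volume asymptotic $I_{(M,g)}(v)\sim c_m\,v^{(m-1)/m}$ as $v\to 0^+$ (coming from local Bishop-type comparison) together with the continuity and strict positivity of $I_{(M,g)}$ on $[\varepsilon,v_0]$ for every $\varepsilon>0$, which force $I_{(M,g)}(v)/v^{(m-1)/m}$ to be bounded away from zero on $(0,v_0)$ for a suitable $\kappa_0$. The only minor subtlety in the present statement is interpretive: for a general measurable $E$, the symbol $H^{m-1}(\partial E)$ should be read as the De~Giorgi perimeter, so that the baseline comparison $H^{m-1}(\partial E)\geq I_{(M,g)}(H^m(E))$ is simply the defining property of the isoperimetric profile as an infimum over finite-perimeter regions (cf.\ \cite{MorganBook}, \cite{Ros}); on this class the above chain of inequalities goes through unchanged.
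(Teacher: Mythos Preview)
Your derivation is correct: the chain $H^{m-1}(\partial E)\geq I_{(M,g)}(v)\geq \kappa_0 v^{(m-1)/m}\geq \kappa_0 v_0^{-1/m}\,v$ is valid for $v\in(0,v_0)$, and the interpretive remark about perimeter is apt. Note, however, that the paper does not actually prove this lemma; it simply records it as a known fact with a reference to Duzaar--Steffen \cite{Duzaar}. Your approach is therefore not so much different as strictly more informative: you supply a two-line self-contained argument from Lemma~\ref{c0} and produce the explicit constant $a_0(v_0)=\kappa_0(v_0)\,v_0^{-1/m}$, whereas the paper treats the inequality as background. The trade-off is that your route inherits whatever hypotheses are needed for Lemma~\ref{c0} (which the paper also states without proof), while the cited reference gives the estimate directly in the finite-perimeter setting.
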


\section{Proof of Theorem \ref{Thm1}}

%

	
	We begin with a construction by F. Morgan in \cite{Morgan}, that estimates lower bounds for isoperimetric profiles of products.

For $(M,g)$, $(N^n,h)$ as in the hypothesis of Theorem \ref{Thm1} and $\lambda>0$, consider the product manifold  $(0,\lambda^{mn}V_M)\times (0,\lambda^{-mn}V_N)\subset \re^2$ 
	with a model metric in the sense of the Ros Product Theorem (Theorem 3.7 in \cite{Ros}).   $(0,\lambda^{mn} V_M)$  and $(0,\lambda^{-mn}V_N)$ will have  Euclidean Lebesgue Measure and Riemannian metric $\frac{1}{\varphi_{\lambda}(x_{\lambda})}dx_{\lambda}$ and $\frac{1}{\psi_{\lambda}(y_{\lambda})}dy_{\lambda}$ respectively, where $\varphi_{\lambda}(x_{\lambda})$ and $\psi_{\lambda}(y_{\lambda})$ denote the   isoperimetric profiles of $(M,\lambda^{2n}g)$ and $(N,\lambda^{-2m}h)$ respectively and $x_{\lambda}\in   (0,\lambda^{mn} V_M)$,   $y_{\lambda}\in   (0,\lambda^{-mn} V_N)$.
	
	We will denote by  $\varphi(x)$ and $\psi(y)$  the   isoperimetric profiles of $(M,g)$ and $(N,h)$, respectively with  $x\in   (0, V_M)$,   $y \in   (0,  V_N)$.
	
We first prove that the given metric is a model metric, in the sense of Theorem 3.7 in \cite{Ros}. Note that it suffices to prove that in each interval, $(0,\lambda^{ mn} V_M)$ and $(0, \lambda^{-mn}V_N)$, intervals of the type $(0,t)$, $t>0$, minimize perimeter, among closed sets $S$ of given Euclidean length $t$.   Let $S\subset(0,\lambda^{mn}V_M)$ be a closed set of length $t$,  and suppose $S$ is not of the type $(0,t)$; then it must be a locally finite collection of closed intervals. It follows that an interior interval must be  borderline unstable  because  the isoperimetric profile   $\varphi(x)$ is  concave. Hence $S$ does not minimize perimeter. The same argument applies for $(0, \lambda^{-mn}V_N)$, since  $\psi(y)$ is also concave.

	On the interval $(0,\lambda^{mn}V_M)$, Minkowski content   counts boundary points of intervals with density  $ \varphi_{\lambda}(x_{\lambda})=\lambda^{n(m-1)}\varphi(\frac{x_{\lambda}}{\lambda^{mn}})$, $x_{\lambda}\in (0,\lambda^{mn}V_M)$, by homogeneity of the isoperimetric profile $\varphi$ (Lemma \ref{homog}). Similarly on the  interval $(0,\lambda^{-mn}V_N)$, with density  $\psi_{\lambda}(y_{\lambda})=\lambda^{-m(n-1)}\psi(\frac{y_{\lambda}}{\lambda^{-mn}})$,  $y_{\lambda}\in (0,\lambda^{-mn}V_N)$,  by homogeneity of the isoperimetric profile $\psi$ (Lemma \ref{homog}). Minkowski content on the stripe
	$(0,\lambda^{mn}V_M)\times (0,\lambda^{-mn}V_N)$ then has perimeter measured by 
	
	\begin{equation}
		\label{Morg}
		ds^2=\lambda^{-2m(n-1)}\psi^2(\frac{y_{\lambda}}{\lambda^{-mn}}) dx_{\lambda}^2+ \lambda^{2n(m-1)}\varphi^2(\frac{x_{\lambda}}{\lambda^{mn}}) dy_{\lambda}^2.
	\end{equation}

	for $(x_{\lambda},y_{\lambda})\in 	(0,\lambda^{mn}V_M)\times (0,\lambda^{-mn}V_N)$.
	
	Given $\lambda>0$, it follows from the proof of the Ros Product Theorem that, for each $v_0\in(0,V_NV_M)$, there is some closed region $E^{\lambda} \subset (0,\lambda^{mn}V_M)\times (0,\lambda^{-mn}V_N)$, with area $V(E^{\lambda})=v_0$, such that $I_{(X,G_{\lambda})}(v_0)$, is bounded from below by the perimeter $P(E^{\lambda})$ of the boundary $\delta E^{\lambda}$. $E^{\lambda}$ is symmetrized in 	$(0,\lambda^{mn}V_M)\times (0,\lambda^{-mn}V_N)$ with respect to each factor. This is, $E^{\lambda}\cap \{x_{\lambda}\}\times   (0,\lambda^{-mn}V_N)$ is an interval $\{x_{\lambda}\}\times [0, r(x_{\lambda})]$ and $E^{\lambda}\cap   (0,\lambda^{mn}V_M)\times \{y_{\lambda}\}$ is an interval $ [0, r(y_{\lambda})]\times \{y_{\lambda}\}$. In particular, $E^{\lambda}$ is such that $\delta E^{\lambda}$ is a connected boundary curve along which $y_{\lambda}$ is nonincreasing and $x_{\lambda}$ is nondecreasing, $(x_{\lambda},y_{\lambda})\in \delta E^{\lambda}\subset  (0,\lambda^{mn}V_M)\times (0,\lambda^{-mn}V_N)$. The enclosed region $E^{\lambda}$ is on the lower left of $\delta E^{\lambda}$.   This is
	
	\begin{equation}
	\label{bound}
	P(E^{\lambda})\leq I_{(X,G_{\lambda})}(v_0)
		\end{equation}
	where

	
	\begin{equation}
		\label{PE}
		P(E^{\lambda}) =\int_{\delta E^{\lambda}} \sqrt{\lambda^{-2m(n-1)} \psi^2(\frac{y_{\lambda}}{\lambda^{-mn}}) dx_{\lambda}^2 + \lambda^{2n(m-1)}\varphi(\frac{x_{\lambda}}{\lambda^{mn}})^2 dy_{\lambda}^2}
	\end{equation}
	by homogeneity of the profiles (Lemma \ref{homog}). And the area of the region $E^{\lambda}$ is 
	\begin{equation}
	\label{Area}
	V(E^{\lambda}) = \int \int_{E^{\lambda}} dx_{\lambda} \, dy_{\lambda}.
		\end{equation}
		
		
	We define	 the transformation $J_{\lambda}:(0,\lambda^{mn}V_M)\times (0,\lambda^{-mn}V_N)\rightarrow (0,V_M)\times (0,V_N)$, for $\lambda_>0$, by $J_{\lambda}(x_\lambda,y_\lambda)=(\lambda^{-mn}x_\lambda,\lambda^{mn}y_\lambda)$. We denote $(x,y)=(\lambda^{-mn}x_\lambda,\lambda^{mn}y_\lambda)$ and $\Omega_{\lambda}=J_{\lambda}(E^\lambda)\subset (0,V_M)\times(0,V_N)$. Note that $V(E^{\lambda})=V(\Omega_{\lambda})=v_0$.




	Equations (\ref{bound}) and (\ref{PE}), together with Lemma \ref{homog} imply the following lower bounds
	
	\begin{equation}
		\label{varphi}
		I_{(X,G_\lambda)}(v) \geq \int_{\delta \Omega_{\lambda}} \lambda^{n(m-1)} \varphi(\frac{\lambda^{mn}x}{\lambda^{mn}})  (\lambda^{-mn}dy) =\lambda^{-n} \int_{\delta \Omega_{\lambda}} \varphi(x)  dy
	\end{equation}
	
	and
	\begin{equation}
		\label{psi}
		I_{(X,G_\lambda)}(v)  \geq  \int_{\delta \Omega_{\lambda}} \lambda^{-m(n-1)} \psi(\frac{\lambda^{-mn}y}{\lambda^{-mn}})  (\lambda^{mn}dx) = \lambda^{m} \int_{\delta \Omega_{\lambda}}   \psi(y)   dx. 
	\end{equation}


 Also, by Lemma \ref{symm}, $I_{(X,G_{\lambda})}(v)=I_{(X,G_{\lambda})}(V_MV_N-v)$, for $v\leq \frac{V_MV_N}{2}$. Hence, in the following we will consider $v_0\leq  \frac{V_MV_N}{2}$. Let $t_0=\frac{v_0}{V_N}$.

 Fix $\alpha\in (\frac{3}{4},1)$ as in the hypothesis. Being $E^{\lambda}$ symmetrized, then $\Omega_{\lambda}$ is symmetrized. Hence, the set $\{(x,y)\in \delta \Omega_{\lambda}|x>\alpha t_0\}$ is not empty, since otherwise $V(\Omega_{\lambda})\leq \alpha t_0V_N<v_0$.
 We now proceed by cases to prove Theorem \ref{Thm1}.

 \begin{enumerate}
 	\item \textbf{Case 1.} \textit{For $(x,y)\in \delta \Omega_{\lambda}$, if $x\geq\alpha t_0 $, then $y\leq \alpha V_N $}
 	
 	Let 
 	$$R_\alpha=\{(x,y)\in (0, V_M)\times (0, V_N)| x\geq \alpha t_0 , y\geq \alpha V_N \}$$
 	
	$$W=\{(x,y)\in (0, V_M)\times (0, V_N)| 0\leq x\leq t_0 , 0\leq y \leq V_N \}$$
 	
 	Note that $V(R_{\alpha})=(1-\alpha)^2t_0V_N$. Also $R_{\alpha}\subset W\setminus \Omega_{\lambda}$ and $V(W\setminus \Omega_{\lambda})=V( \Omega_{\lambda}\setminus W)$. Hence	
 	$V( \Omega_{\lambda}\setminus W) \geq V(R_{\alpha})=(1-\alpha)^2t_0V_N   $.

	On the other hand, since $\psi$ is concave, with $\psi(0)=\psi(V_N)=0$, there is some    $\kappa>0$, such that 
$\psi(\alpha V_N)=\kappa \alpha V_N$, and $\psi(y)\geq \kappa y$ for $0\leq y\leq  \alpha V_N$. Together with eq. (\ref{psi}) this yields

 $$I_{(X,G_\lambda)}(v)\geq \lambda^m\int_{\delta  \Omega_{\lambda}}\psi(y) dx \geq \lambda^m\int_{\delta  \Omega_{\lambda}\setminus W} \kappa y dx\geq\lambda^m \kappa \ \ \int_{\delta  \Omega_{\lambda}\setminus W} y \ dx$$
 $$= \lambda^m \kappa\  V( \Omega_{\lambda}\setminus W) \geq \lambda^m \kappa \ \ (1-\alpha)^2 \ t_0 \ V_N$$
 
 since $\int_{\delta  \Omega_{\lambda}\setminus W} y dx=V( \Omega_{\lambda}\setminus W)$.
 
Together with (\ref{upper_bound}), we get

 $$\lambda^{-n} V_N \varphi(t_0)\geq I_{(X,G_\lambda)}(v_0)\geq \lambda^m \kappa \ \ (1-\alpha)^2 \ t_0 \ V_N$$

 this is,
 
 $$\lambda^{m+n} \leq  \frac{\varphi(t_0)}{t_0 \ \kappa \ \ (1-\alpha)^2 }   $$
 
 	We conclude that this case cannot be possible for $\lambda$ big enough, since $\alpha$, $\kappa$ and $t_0$ are independent of $\lambda$.

 	 	\item \textbf{Case 2.} \textit{There is some $(x_1,y_1)\in \delta \Omega_{\lambda}$, such that $x_1>\alpha t_0$ and $y_1> \alpha V_N $.}%
 	
 	We subdivide this in cases.
 	
 	\begin{enumerate}
 		\item \textbf{Case 2.a} \textit{For all $(x,y)\in \delta \Omega_{\lambda}$,  $x\leq  (V_M-\alpha t_0)$.
 			Or, for $(x,y)\in \delta \Omega_{\lambda}$, if $x>(V_M-\alpha t_0) $ then  $y\leq (1-\alpha) V_N$.}


 	Now, since $\varphi$ is concave and symmetric,
 	 	$\varphi(\alpha t_0)\leq \varphi(x)$ for $\alpha t_0\leq x\leq V_M-\alpha t_0$. Hence, by equation (\ref{varphi})
 	
 	 \begin{equation}\label{Omega1}
 	 	I_{(X,G_{\lambda})}(v_0)\geq \int_{\delta\Omega_{\lambda}} \lambda^{-n}\varphi(x)dy \geq \int_{\delta\Omega_{\lambda}\cap U} \lambda^{-n}\varphi(x)dy \geq \lambda^{-n} \varphi(\alpha t_0) \int_{\delta\Omega_{\lambda}\cap U} dy
 	 \end{equation} 
 	
 	\noindent where $$U=\{(x,y)\in (0,V_M)\times (0,V_N)|  \alpha t_0 \leq x\leq V_M-\alpha t_0\}$$
 	\noindent Now, using the hypothesis of this case, we have some $y_1> \alpha V_N$, $(x_1,y_1)\in \delta \Omega_{\lambda}\cap U$, and some  $y_2\leq (1-\alpha) V_N$, $(x_2,y_2)\in \delta \Omega_{\lambda}\cap U$. Also, since  $\Omega_{\lambda}$ is symmetrized, $x_1\leq x_2$. This yields
 	
\begin{equation}\label{Omega2}
	 \lambda^{-n} \varphi(\alpha t_0)  \int_{\delta\Omega_{\lambda}\cap U} dy	\geq \lambda^{-n}  \varphi(\alpha t_0) (\alpha V_N-(1-\alpha)V_N)
	  	 \end{equation}

 \noindent	Also, since $\varphi$ is concave and $\varphi(0)=0$, then $\varphi(\alpha t_0)\geq \alpha \varphi(t_0)$, for $0\leq \alpha\leq 1$. Together with (\ref{Omega1}) and (\ref{Omega2}), this implies 
 $$I_{(X,G_{\lambda})}(v_0)\geq \lambda^{-n} \varphi( t_0) \alpha (2\alpha-1)  V_N$$

 If we recall that $t_0= \frac{v_0}{V_N}$, and note that $\alpha (2\alpha-1)>\alpha^4$ for $\alpha \in (\frac{3}{4},1)$, we have the lower bound  of Theorem \ref{Thm1}.

 
  		\item \textbf{Case 2.b} \textit{For $(x,y)\in \delta \Omega_{\lambda}$, if $x\geq (V_M-\alpha t_0) $ then  $y>(1-\alpha) V_N $.}
 
  	Let 
 
 $$S=\{(x,y)\in (0, V_M)\times (0, V_N)|x\geq (V_M-\alpha t_0) , y\leq (1-\alpha) V_N \}$$
 Note that $V(S)=(1-\alpha)^2 t_0 V_N$. Also, in this case, $S\subset \Omega_{\lambda}$. 
 
  By eq. (\ref{psi}) 
  
  $$I_{(X,G_\lambda)}(v)\geq \lambda^{m}\int_{\delta  \Omega_{\lambda}} \psi(y)dx $$
  $$\geq \lambda^m\int_{\delta \Omega_{\lambda}\cap S} \kappa y dx\geq\lambda^m \kappa \ V(S)\geq \lambda^m \kappa \ \ (1-\alpha)^2 \ t_0 \ V_N$$
  
  since, by concavity of $\psi$, there is   some $\kappa=\kappa(\alpha)>0$, such that $\psi(y)\geq \kappa y$, for $y$, $y\leq (1-\alpha)V_N$; i.e. for  $(x,y) \in \delta  \Omega_{\lambda}\cap S$.   
  From this lower bound and the upper bound  for $I_{(X,G_\lambda)}(v_0)$, equation (\ref{upper_bound}), it follows
  
  $$\lambda^{m+n} \leq  \frac{\varphi(t_0)}{t_0\   \kappa \ \ (1-\alpha)^2 }   $$

 which is not possible for $\lambda$ big enough.
 
 	\end{enumerate}

 	Finally, we note that the upper bound of the Theorem is trivial, since it is the area of the boundary of a closed region
 	$ \Pi^{\lambda}\times N_{\lambda} \subset (X,G_{\lambda})$, where $N_{\lambda}=(N,\lambda^{-2m}h)$ and $\Pi^{\lambda}$ is an isoperimetric region of $(M^m,\lambda^{2n}g)$ of volume $V(\Pi^{\lambda})=\lambda^{mn}\frac{v_0}{V_N}$. 	 By direct computation and homogeneity of the isoperimetric profile (Lemma \ref{homog}), 
 	
 	 	$$A(\partial(\Pi^{\lambda}\times N_{\lambda}))=\lambda^{-n} \  \ V_N \  \ I_{(M^m,g)}\left(\frac{v_0}{V_N}\right)$$
 	

 \end{enumerate}
 

 \section{Proof of Theorem \ref{Thm2}}
  
  \subsection{General notation for the section}
  In this section we will denote $X_{\lambda}=(X,G_{\lambda})=(S^m\times N^n,\lambda^{2n}g_0+\lambda^{-2m}h)$, $\lambda>0$,  where $(S^m,g_0)$ is the m-sphere with the round metric and radius 1, while $(N,h)$ is a closed manifold with concave isoperimetric profile. Also, we will denote $X=X_1=(X,G)$.

 
 It is well known that disks (geodesic balls) are isoperimetric regions on $(S^m,g_0)$. We will denote disks on $S^m$, centered at $p\in S^m$ and of radius $r$, by $D_p(r)$. We fix  a point $x_N\in S^m$ as north pole (or origin) and denote its antipodal point as $x_S$. We will denote the distance from a point $p\in S^m$ to $x_N$ by $|p|$.  Given a closed region $E\subset S^m \times N$, let $symm(E)$ be the corresponding symmetrized set with respect to $S^m$ and $x_N$, this is,  the set $symm(E)$ such that for each $q\in N$,
 $$symm(E)\cap(S^m \times \{q\})= D_{x_N}(r_q)\times \{q\},$$
 where $r_q\geq 0$  is such that $H^m( D_{x_N}(r_q))=H^m(E\cap( S^m\times \{q\} )$.
 
 Note that for the volumes we have $V(symm(E))=V(E)$, by construction. Meanwhile, for the areas of the boundaries, $A(\partial symm(E))\leq A(\partial E)$, by   (the proof of)  the Ros product Theorem (\cite{Ros}), since disks are isoperimetric in $S^m$.  
 
 \subsubsection{\textbf{Cylinders $B_{p}(r)\subset X$}}
 We will denote by $B_{p}(r)$, $p \in S^m$ and $r\in (0,\pi)$, a region of the type $D_{p}(r)\times N$ in $(S^m\times N^n, g_0+h)$. We will refer to such regions as cylinders. More generally, we denote cylinders on $X_{\lambda}$ as 
 $B^{\lambda}_{  p}(R)$, $p \in (S^m,\lambda^{2n} g_0)$ and $R\in(0,\lambda^n \pi)$, and define them as a region of the type $D^{\lambda}_{  p}(R)\times N_{\lambda}$ in $(S^m\times N^n, \lambda^{2n}g_0+\lambda^{-2m}h)$. Where $D^{\lambda}_{  p}(R)$ is a disk on  $(S^m,\lambda^{2n} g_0)$ centered at $p\in (S^m,\lambda^{2n} g_0)$  with radius $R>0$, and $N_{\lambda}=(N,\lambda^{-2m}h)$. 
 
 
 By direct computation and homogeneity of the isoperimetric profile (Lemma \ref{homog}), 
 
 $$V(B^{\lambda}_{  p}(R))=V(D_p(\lambda^{-n}R)) \ V_N$$
 $$A(\partial B^{\lambda}_{  p}(R))=\lambda^{-n} \  \ V_N \  \ I_{(S^m,g_0)}\left(\frac{v}{V_N}\right)$$
 
 with $D_p(\lambda^{-n}R)$ a disk on $(S^m,g_0)$, of radius $\lambda^{-n}R$, and $v=V(B^{\lambda}_{p}(R))$.
 

 \subsubsection{\textbf{A transformation $T_{\lambda}$}}

 For $\lambda>0$, we define the transformation $T_{\lambda}:(X,G)\rightarrow(X,G_{\lambda})$, by $T_{\lambda}(p,q)=(\lambda^{n} p,\lambda^{-m}q)$, for $p \in (S^m,g_0)$, $q \in (N,h)$. Note that $T_{\lambda}$ is invertible, $T_{\lambda}^{-1}:(X,G_{\lambda})\rightarrow(X,G)$ and  $T_{\lambda}^{-1}(p,q)=(\lambda^{-n} p,\lambda^{m}q)$, for $p \in (S^m,\lambda^{2n}g_0 )$, $q \in (N, \lambda^{-2m}h)$.
 
 
 Since the Jacobian of $T_{\lambda}$ is $\lambda^{mn}\lambda^{-mn}=1$, we get $V(T_{\lambda}(E))=V(E)$, for measurable sets $E\subset (X,G)$.
 
 Let $\Sigma \subset (X,G)$, be an $m+n-1$ rectifiable set. At a regular point $z \in \Sigma$, the unit normal $\hat n$, can be decomposed as $\hat n= av+bw$, with $a^2+b^2=1$, where $v$ is tangent to $M$ and $w$   tangent to $N$. Then we have that the Jacobian of $T_{\lambda}\vert_{\Sigma}$ is
 $\sqrt{\lambda^{2m}a^2+\lambda^{-2n}b^2}$. Hence, for $\lambda\geq1$
 \begin{equation}\label{HT}
 	\lambda^{-n}H^{m+n-1}(\Sigma)\leq H^{m+n-1}(T_{\lambda}(\Sigma))\leq \lambda^m H^{m+n-1}(\Sigma) 
 \end{equation}
 
 \noindent and if  $\lambda\leq1$ then the inequalities are reversed. If $a=0$, or, equivalently  $\hat n$ is tangent to $N^n$, then
 \begin{equation}\label{HT2}
 	H^{m+n-1}(T_{\lambda}(\Sigma))=\lambda^{-n}H^{m+n-1}(\Sigma).
 \end{equation}

 Note also that since $X_{\lambda}$ is a product, $I_{(X,G_\lambda)}(v)$ is bounded from above by the area of the boundary of the $T_\lambda$ transformation of a product $D_t\times N\subset X$, where $D_t$ is   an isoperimetric region  in $(S^m,g_0)$ of volume $t=\frac{v}{V_N}$.  This is 
 
 \begin{equation}
 	\label{upper_bound}
 	I_{(X,G_\lambda)}(v) \leq A(\partial (T_{\lambda}(D_t\times N))) =\lambda^{-mn} V_N \lambda^{n(m-1)}I_{(S^m,g_0)}\left(\frac{\lambda^{mn}t}{\lambda^{mn}}\right)=	\lambda^{-n} V_N I_{(S^m,g_0)}\left(\frac{v}{V_N}\right). 
 \end{equation}

 \subsubsection{\textbf{A transformation $F_{\beta}$}}
 Given the fixed point $x_N\in S^m$, and $\beta\in [1,\pi)$,   we consider the transformation $F_{\beta}:B_{x_N}(\frac{\pi}{\beta })\subset S^m\times N \rightarrow S^m\times N$, given by
 
 \begin{equation}\label{Fb}
 	F_{\beta}(p,q)=(\beta \ p, q)
 \end{equation}
 
 \noindent for $p\in S^m, q\in N$. For $0<\beta<1$,   we consider the   same transformation but with  domain, $ \mathring D_{x_N}(\pi)\times N$, where   $\mathring D_{x_N}(\pi)$ denotes an open disc on $(S^m,g_0)$ of radius $\pi$, centered at $x_N$. One can check that with these domains $F_{\beta}$ is well defined. 
 
 
 The Jacobian of $F_{\beta}$ is $\beta^m$, hence $V(F_{\beta}(E))=\beta^m V(E)$, for measurable sets $E\subset B_{x_N}(\frac{\pi}{\beta })$ or $ E\subset\mathring D_{x_N}(\pi)\times N$, depending on whether $\beta\in [1,\pi)$ or $\beta\in (0,1)$.

 Let $\Sigma$ be a $m+n-1$ rectifiable set, in the  domain of $F_{\beta}$. At a regular point $z \in \Sigma$, the unit normal $\hat n$, can be decomposed as $\hat n= av+bw$, with $a^2+b^2=1$, with $v$ tangent to $S^m$ and $w$ tangent to $N$. The Jacobian of $F_{\beta}\vert_{\Sigma}$ is
 $\beta^{m-1}\sqrt{\beta^{2}a^2+b^2}$. Hence, for $\beta\in [1,\pi)$,
 \begin{equation}\label{HFb}
 	\beta^{m-1} H^{m+n-1}(\Sigma)\leq H^{m+n-1}(F_{\beta}(\Sigma))\leq \beta^{m} H^{m+n-1}(\Sigma)
 \end{equation}

 For $0<\beta<1$ the inequalities are reversed. 
 \subsection{Proof of Theorem \ref{Thm2}}

For the proof of  Theorem \ref{Thm2}, we follow closely techniques and ideas developed in the work of M. Ritor\'e and S. Vernadakis \cite{RiVer}. We first prove the $L^1$ topological convergence of some isoperimetric regions in $X_{\lambda}$ to cylinders as $\lambda\rightarrow \infty$ (Lemma \ref{L1}). Then,  we improve this to convergence in Hausdorff distance (Lemma \ref{Hausdorff}). Finally, we make use of the stability of the boundaries of cylinders to prove actual convergence, i.e., Theorem \ref{Thm2}.
 
 \begin{Lemma}\label{L1}
Given $\epsilon>0$, there is some $\lambda_0>0$ such that for any $\lambda>\lambda_0$ and $v\in(0, \frac{V_NV_M}{2}]$, $$V(\Omega_{\lambda}^v\Delta B_{x_N}(r_v))< \epsilon.$$

  Here $\Omega_{\lambda}^v=T_{\lambda}^{-1}(K^{\lambda}_v)$, and $K^{\lambda}_v$ is a symmetrized isoperimetric region of $X_{\lambda}$ of volume $v$, $v\in \in (0, \frac{V_NV_M}{2}]$. Meanwhile $B_{x_N}(r_v)$ is a cylinder $D_{x_N}(r_v)\times N$, with $D_{x_N}(r_v)$ a disk on $S^m$ centered at $x_N$ and radius $r_v>0$ such that $V(D_{x_N}(r_v))=\frac{v}{V_N}$.
 
 	
 	
 \end{Lemma}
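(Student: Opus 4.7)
My plan is a compactness and contradiction argument. Suppose the lemma fails: there exist $\epsilon>0$ and sequences $\lambda_k\to\infty$, $v_k\in(0,V_MV_N/2]$ with $V(\Omega_{\lambda_k}^{v_k}\,\Delta\,B_{x_N}(r_{v_k}))\geq\epsilon$. The case $v_k\to 0$ is ruled out by the trivial bound $V(\Omega_{\lambda_k}^{v_k}\,\Delta\,B_{x_N}(r_{v_k}))\leq 2v_k$, so along a subsequence $v_k\to v^*\in(0,V_MV_N/2]$. Combining the upper estimate of Theorem \ref{Thm1} with the Jacobian bound (\ref{HT}) applied to $T_{\lambda}^{-1}$ produces the uniform perimeter bound
\[
H^{m+n-1}(\partial\Omega_{\lambda}^v)\;\leq\;\lambda^n I_{X_\lambda}(v)\;\leq\;V_N\,\varphi(v/V_N)\;\leq\;V_N\,\varphi(V_M/2),
\]
and BV compactness on the compact manifold $X$ yields, along a further subsequence, $\Omega_{\lambda_k}^{v_k}\to\Omega^*$ in $L^1(X)$ with $V(\Omega^*)=v^*$.

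Since each $\Omega_{\lambda_k}^{v_k}$ is symmetrized in the $S^m$-factor about $x_N$ (inherited from $K_{\lambda_k}^{v_k}$ via the slice-preserving $T_{\lambda_k}^{-1}$), this slicing structure persists in the $L^1$ limit and $\Omega^*=\{(p,q):|p|\leq r^*(q)\}$ for some measurable $r^*:N\to[0,\pi]$. Let $f_k(q):=V(D_{x_N}(r_k(q)))$ be the slice-volume function of $\Omega_{\lambda_k}^{v_k}$ and let $g_k:[0,V_N]\to[0,V_M]$ be its nonincreasing rearrangement with respect to $h$; by the Ros construction, $g_k$ is the height function of the rescaled 2D region $\Omega_{\lambda_k}=J_{\lambda_k}(E^{\lambda_k})\subset(0,V_M)\times(0,V_N)$ from the proof of Theorem \ref{Thm1}, whose boundary is the monotone graph $x=g_k(y)$. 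Combining $\lambda_k^n I_{X_{\lambda_k}}(v_k)\geq\lambda_k^n P(E^{\lambda_k})$ with the explicit formula
\[
\lambda_k^n P(E^{\lambda_k})=\int_0^{V_N}\sqrt{\lambda_k^{2(m+n)}\,\psi^2(y)(g_k'(y))^2+\varphi^2(g_k(y))}\,dy
\]
and the upper bound $\lambda_k^n I_{X_{\lambda_k}}(v_k)\leq V_N\varphi(v_k/V_N)$, dropping the $\varphi^2$ term under the radical and dividing by $\lambda_k^{m+n}$ produces
\[
\int_0^{V_N}\psi(y)\,|g_k'(y)|\,dy\;\leq\;\lambda_k^{-(m+n)}\,V_N\,\varphi(V_M/2)\;\longrightarrow\;0.
\]

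Because $\psi$ is continuous and strictly positive on every $[a,b]\subset(0,V_N)$, the vanishing above forces the total variation of $g_k$ on any such $[a,b]$ to tend to $0$; Helly's selection theorem then produces a subsequential $L^1$ limit $g^*$ which is constant on $(0,V_N)$, and the volume constraint $\int g_k=v_k\to v^*$ fixes $g^*\equiv v^*/V_N$. Since the $L^1$-distance to a constant is invariant under monotone rearrangement, $\int_0^{V_N}|g_k-v^*/V_N|\,dy\to 0$ implies $\int_N|f_k-v^*/V_N|\,dh\to 0$. Inverting the strictly increasing continuous map $r\mapsto V(D_{x_N}(r))$ gives $r_k\to r_{v^*}$ in $L^1(N,h)$, and nested disks satisfy $V(D_{x_N}(r_k(q))\,\Delta\,D_{x_N}(r_{v^*}))=|f_k(q)-v^*/V_N|$, so Fubini yields $V(\Omega_{\lambda_k}^{v_k}\,\Delta\,B_{x_N}(r_{v^*}))\to 0$. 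Combined with $V(B_{x_N}(r_{v_k})\,\Delta\,B_{x_N}(r_{v^*}))=|v_k-v^*|\to 0$, this contradicts the failure assumption. The main obstacle is transferring the one-dimensional rearrangement information back to $L^1$-convergence of the full $(m+n)$-dimensional regions $\Omega_{\lambda_k}^{v_k}$; the rearrangement-invariance of the $L^1$-distance to a constant is the essential bridge.
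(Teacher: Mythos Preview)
Your argument is correct, and it takes a genuinely different route from the paper's. The paper gives a direct quantitative proof: it fixes $w_0<V_N$ with $(V_N-w_0)\tfrac{V_M}{2}<\epsilon$, then splits into two cases according to whether the $N$-projection of $\Omega_\lambda^v\cap\partial B_{x_N}(r_v)$ has measure $\geq w_0$ or $<w_0$. In the first case a simple inclusion bounds $V(B_{x_N}(r_v)\setminus\Omega_\lambda^v)$; in the second the coarea formula in the radial $S^m$-direction together with the linear isoperimetric inequality of Lemma~\ref{a0} in $N$ (the constant $a_0$) yields $a_0\lambda^m\cdot\tfrac12 V(\Omega_\lambda^v\Delta B_{x_N}(r_v))\leq A(\partial K_v^\lambda)\leq\lambda^{-n}V_N I_{S^m}(V_M/2)$, from which an explicit $\lambda_0$ follows. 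Your approach instead recycles the two-dimensional Ros--Morgan model from the proof of Theorem~\ref{Thm1}: the bound $\int_0^{V_N}\psi(y)\,|dg_k|\leq\lambda_k^{-(m+n)}V_N\varphi(V_M/2)\to 0$ forces the monotone height functions $g_k$ to flatten to the constant $v^*/V_N$, and the equimeasurability identity $\int_N|f_k-c|\,dh=\int_0^{V_N}|g_k-c|\,dy$ carries this back to $L^1$-convergence of $\Omega_{\lambda_k}^{v_k}$. The paper's method yields an explicit $\lambda_0(\epsilon)$ and uses the geometry of $N$ via $a_0$; yours is softer and reuses existing machinery, at the cost of being non-constructive. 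Two minor remarks: your BV-compactness paragraph producing $\Omega^*$ is never used in the rest of the argument and can be dropped; and your perimeter identity for $\lambda_k^n P(E^{\lambda_k})$ should be read with $|g_k'(y)|\,dy$ interpreted as the total-variation measure $|dg_k|$ (equivalently, equation~(\ref{psi}) gives the needed inequality $\lambda_k^{m}\int_{\delta\Omega_{\lambda_k}}\psi(y)\,dx\leq I_{X_{\lambda_k}}(v_k)$ directly), since the boundary curve may contain horizontal segments.
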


 \begin{proof}
Let $\epsilon>0$.  	Let $w_0\in (0, V_N)$, such that
\begin{equation}
	\label{w0}
	(V_N-w_0)(\frac{V_M}{2})<\epsilon.
\end{equation}

 	
 	Let 
 \begin{equation}\label{l0}
 	\lambda_0 >\left( \frac{2}{a_0  \epsilon} V_N I_{(S^m,g_0)}\left(\frac{V_M}{2}\right) \right)^{\frac{1}{m+n}} 
 \end{equation}
 
 \noindent with $a_0>0$, the constant given by		Lemma \ref{a0},  for the manifold $(N,h)$ and volume $w_0$, such that for any set $E\subset N$, with $H^n(E)\leq w_0$,  we have	$H^{n-1}(\partial E ) \geq a_0 H^{n}( E)$.Let  $v\in (0, \frac{V_NV_M}{2}]$ and $\lambda>\lambda_0$.

 Note that,	being $ K^{\lambda}_v $ symmetrized and of volume $v$, then $\Omega_{\lambda}^v$ is also symmetrized and of volume $v$.

 	 For a set $E\subset X$ we denote its orthogonal projection onto the factor $N$ by $E^*$.
 	We proceed by cases. 	We  exclude the case $ H^n((\Omega_{\lambda}^v\cap \partial B_{x_N}(r_v))^*)=0$, since $\Omega_{\lambda}^v$ is symmetrized and $V(\Omega_{\lambda}^v)>0$, so it is not possible. Similarly, we exclude  the case  $ H^n((\Omega_{\lambda}^v\cap \partial B_{x_N}(r_v))^*)=V_N$ since, being $\Omega_{\lambda}^v$ symmetrized  and of volume $v$, it trivially  implies $V(\Omega_{\lambda}^v\Delta B_{x_N}(r_v))= 0$.

 	\begin{description}

 		\item[\textbf{Case 1}]\textit{ $ H^n((\Omega_{\lambda}^v\cap \partial B_{x_N}(r_v))^*)\geq w_0$.}

 Since  $\Omega_{\lambda}^v$ is  symmetrized, we have
 	$$(\Omega_{\lambda}^v\cap \partial B_{x_N}(r_v))^*\subset (\Omega_{\lambda}^v\cap B_{x_N}(r_v))^*$$
 	
 \noindent	so that $(B_{x_N}(r_v) \setminus \Omega_{\lambda}^v) \subset (N \setminus (\Omega_{\lambda}^v \cap \partial B_{x_N}(r_v))^*) \times D_{x_N}(r_v)$, 
 where $D_{x_N}(r_v)$ is of volume $\frac{v}{V_N}$. Then
 	\begin{equation}\label{proj1}
 	V(B_{x_N}(r_v) \setminus \Omega_{\lambda}^v) \leq V\left((N \setminus (\Omega_{\lambda}^v \cap \partial B_{x_N}(r_v))^*) \times D_{x_N}(r_v)\right)
 		 		 	\end{equation}
 		 		 	$$\leq (V_N-w_0)(\frac{v}{V_N})\leq (V_N-w_0)\left(\frac{\frac{V_NV_M}{2}}{V_N}\right)$$
 		 		 	
 		 		 Together with eq. (\ref{w0}), this implies $ 	V(B_{x_N}(r_v) \setminus  \Omega_{\lambda}^v) <  \epsilon$.

 	

 	 	 	 	
 		\item[Case 2] $ H^n((\Omega_{\lambda}^v\cap \partial B_{x_N}(r_v))^*)<w_0$.

	Using the transformation $T_{\lambda}$,  we denote $B^{\lambda}(  r)=  T_{\lambda}(  B_{x_N}(r))=D_{  x_N}^{\lambda}(  r)\times N_{\lambda}\subset X_{\lambda}$. In turn, we denote by $D_{  x_N}^{\lambda}(  r)$ a disk of radius $r$, centered at $x_N$ on $(S^m,\lambda^{2m}g_0)$.

  For each symmetrized set $K^{\lambda}$, $K^{\lambda}=\cup_{y \in( K^{\lambda})^*}( D_{x_N}^{\lambda}(  r(y))\times\{y\})\subset X_{\lambda}$. Hence we have $(K^{\lambda}\cap \partial B^{\lambda}(s))^*=\{ y \in N_{\lambda} | r(y)\geq s \}$. 
 Therefore,

 \begin{equation}\label{subset}
 	(K^{\lambda}_v\cap \partial  B^{\lambda}(s))^*\subset (K^{\lambda}_v\cap \partial   B^{\lambda}(\lambda^n r_v))^{ *}
 		\end{equation}
 		
\noindent for all $s\in (\lambda^n r_v,\lambda^n \pi)$. Hence, the hypothesis of this case implies

 \begin{equation}\label{hypothesiss}
 	 H^n((\Omega_{\lambda}^v\cap \partial B_{x_N}(s))^*)<w_0
 		\end{equation}

\noindent for all $s\in (  r_v,   \pi)$. Then,  by (\ref{subset}) and by the coarea formula:  
 $$H^{m+n-1}(\partial K^{\lambda}_v)\geq H^{m+n-1}(\partial K^{\lambda}_v\cap (X_{\lambda}\setminus T_{\lambda}(  B_{x_N}(\lambda^n r_v))))$$
\begin{equation}\label{first}
	\geq \int_{\lambda^n r_v}^{\lambda^n \pi} H^{m+n-2}(\partial K^{\lambda}_v\cap \partial B^{\lambda}(s) )ds
	\end{equation}

Note also    that $\partial (K^{\lambda}_v\cap \partial B^{\lambda}(s)) \subset \partial K^{\lambda}_v\cap \partial B^{\lambda}(s) $. And since $K^{\lambda}_v$ is symmetrized, then,  for each $s$,
\begin{equation}\label{projection}
	K^{\lambda}_v\cap \partial   B^{\lambda}(s) = (K^{\lambda}_v\cap   \partial B^{\lambda}(s))^* \times \partial D_{x_N}(s)
\end{equation}  

From these two facts, we have

$$	\int_{\lambda^n r_v}^{\lambda^n \pi} H^{m+n-2}(\partial K^{\lambda}_v\cap \partial B^{\lambda}(s) )ds\geq \int_{\lambda^n r_v}^{\lambda^n \pi} H^{m+n-2}(\partial (K^{\lambda}_v\cap \partial B^{\lambda}(s)) )ds $$

\begin{equation}\label{second}
= \int_{\lambda^n r_v}^{\lambda^n \pi} H^{n-1}(\partial( K^{\lambda}_v\cap \partial B^{\lambda}(s))^* )H^{m-1}(\partial D(s)) ds
 \end{equation}

Consider now the set $S^{\lambda}_s=(K^{\lambda}_v\cap \partial B^{\lambda}(s))^* \subset N_{\lambda}=(N,\lambda^{-2m}h)$. We use the simple scaling $\tau_{\lambda}:N_{\lambda}\rightarrow N$, given by 
$\tau_{\lambda}(p)= \lambda^{m}p$ for $p \in N_{\lambda}$.

Let $E^{\lambda}_s=\tau_{\lambda}(S^{\lambda}_s)\subset N=(N,h)$. Note first that $H^n(E^{\lambda}_s)< w_0$, by (\ref{hypothesiss}), for any $s\in (\lambda^n r_v,\lambda^n \pi)$.

We now use the constant $a_0>0$ from  Lemma \ref{a0}, for $N$ and $ 0< w_0<V_N$; such that for any set $E\subset N$, with $H^n(E)\leq w_0$, we get $H^{n-1}(\partial E ) \geq a_0 H^{n}( E)$. Hence,  for any  $s\in(\lambda^n r_v,\lambda^n \pi)$, 
 
 $$H^{n-1}(\partial E_s^{\lambda} ) \geq a_0 H^{n}( E_s^{\lambda})$$
 
 Using the inverse of $\tau_{\lambda}$, we obtain
 
  $$H^{n-1}(\partial S_s^{\lambda} )= \lambda^{-m(n-1) } H^{n-1}(\partial E_s^{\lambda} )\geq \lambda^{-m(n-1) } a_0 H^{n}( E_s^{\lambda})$$
  $$ =\lambda^{-m(n-1) } a_0 \lambda^{mn } H^{n}( S_s^{\lambda})$$

 This is,  
 $$H^{n-1}(\partial ( K^{\lambda}_v\cap \partial B^{\lambda}(s))^* ) \geq a_0 \lambda^{m} H^{n}( K^{\lambda}_v\cap \partial B^{\lambda}(s))^* )$$

From this we get 
 $$ \int_{\lambda^n r_v}^{\lambda^n \pi} H^{n-1}(\partial( K^{\lambda}_v\cap \partial B^{\lambda}(s))^* )H^{m-1}(\partial D(s)) ds \geq \int_{\lambda^n r_v}^{\lambda^n \pi} a_0 \lambda^{m} H^{n}( K^{\lambda}_v\cap \partial B^{\lambda}(s))^*)H^{m-1}(\partial D(s) )ds$$
 	    
 	 \begin{equation}\label{last}= \int_{\lambda^n r_v}^{\lambda^n \pi} a_0\lambda^{m} H^{m+n-1}( K^{\lambda}_v\cap \partial B^{\lambda}(s)) ) ds 	   		  = a_0 \lambda^{m} V( K^{\lambda}_v\cap   (X_{\lambda}\setminus  B_{x_N}(\lambda^n r_v)))
 	   \end{equation}
 	   
 	 	\noindent where in the last inequalities we used eq. (\ref{projection}) and the coarea formula.
 
  On the other hand, using the transformation $T_{\lambda}$, we note that,

 \begin{equation}\label{Transf}
			V( K^{\lambda}_v\cap   (X_{\lambda}\setminus  B_{x_N}(\lambda^n r_v)))=	V( T_{\lambda}^{-1}(K^{\lambda}_v)\cap   T_{\lambda}^{-1}( X_{\lambda}\setminus B_{x_N}^{\lambda}(\lambda^n r_v))
		\end{equation}	 	
 	 	$$ = 	V(\Omega_{\lambda}^v \cap(X\setminus B_{x_N}(r_v))  = \frac{1}{2}	V(\Omega_{\lambda}^v \Delta B_{x_N}(r_v)) $$
 	 	
 	 	
 	 	
 	Putting together equations (\ref{first}), (\ref{second}), (\ref{last}) and (\ref{Transf}), yields
 	
 	\begin{equation}\label{contradict}
   a_0 \lambda^{m} \frac{1}{2}	V(\Omega_{\lambda}^v \Delta B_{x_N}(r_v)) \leq H^{m+n-1}(\partial K^{\lambda}_v)  
 	\end{equation}

Finally, being $K^{\lambda}_v$ isoperimetric in $X_{\lambda}$,    eq. (\ref{upper_bound}) yields

 	 \begin{equation}\label{contradict2}
 	 	 H^{m+n-1}(\partial K^{\lambda}_v)\leq \lambda^{-n} V_N I_{(S^m,g_0)}\left(\frac{v}{V_N}\right) \leq \lambda^{-n} V_N I_{(S^m,g_0)}\left(\frac{V_M}{2}\right)
 	    	\end{equation}
 	    	
 \noindent	 where the last inequality follows from the fact that  $I_{(S^m,g_0)}(t)$ is increasing for $t \in(0,\frac{V_M}{2})$ and $v\leq \frac{V_NV_M}{2}$.  Equations (\ref{contradict}) and (\ref{contradict2}), imply

    $$	V(\Omega_{\lambda}^v \Delta B_{x_N}(r_v)) <\lambda^{-(m+n)}   \frac{2}{a_0  } V_N I_{(S^m,g_0)}\left(\frac{V_M}{2}\right)<\epsilon $$
 
 \noindent where the last inequality follows from  equation (\ref{l0}) and the fact that $\lambda>\lambda_0$.
 	  
 \end{description}

 \end{proof}


\begin{Remark}\label{half}
	By Lemma \ref{symm} we will only be considering volumes $v\in (0,\frac{V_MV_N}{2})$.
\end{Remark}

 We will need the following to  improve the $L^1$ convergence to Hausdorff convergence.
 

 \begin{Lemma}\label{emptyend}
 	Let $v_1,v_2\in (0,\frac{V_MV_N}{2})$, $v_1<v_2$. 
 	 	Then, there is some $\lambda_0>1$, such that, for any $\lambda>\lambda_0$ and $v_0\in [v_1,v_2]$,   $V(B_{\lambda^n x_S}^{\lambda}(\frac{1}{2}\lambda^n )\cap K^{\lambda}_{v_0})=0$. 
 	
 	Where   $K^{\lambda}_{v_0}\subset X_{\lambda}$,  is a symmetrized isoperimetric region of $X_{\lambda}$, of volume $v_0$; and $B_{ \lambda^n x_S}^{\lambda}( \frac{1}{2}\lambda^n)$ is a cylinder $D_{\lambda^n x_S}^{\lambda}(\lambda^n \frac{1}{2})\times N_{\lambda}\subset X_{\lambda}$, with $D_{\lambda^n x_S}^{\lambda}(\lambda^n \frac{1}{2})$ a disk on $(S^m, \lambda^{2n}g_0)$ centered on $\lambda^nx_S$, the antipodal point to the origin $x_N$.
 	
 \end{Lemma}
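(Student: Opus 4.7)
The plan is to bootstrap the $L^1$ proximity of Lemma~\ref{L1} to an exact vanishing by replaying its coarea argument on the southern cap region, and then invoking the isoperimetric minimality of $K^\lambda_{v_0}$ through a cut-and-paste competitor. First I would transfer to the unscaled picture via $T_\lambda^{-1}$: set $\Omega := T_\lambda^{-1}(K^\lambda_{v_0})\subset (X,G)$, which is symmetrized about $x_N$ with slice radius function $r(q)$ on $N$; then $V(B^\lambda_{\lambda^n x_S}(\frac{1}{2}\lambda^n)\cap K^\lambda_{v_0}) = V(\Omega\cap B_{x_S}(\frac{1}{2}))$. Since $v_0\leq v_2<\frac{V_MV_N}{2}$, the cylinder radius $r_{v_0}$ satisfying $V(B_{x_N}(r_{v_0}))=v_0$ is uniformly bounded by some $r^*<\frac{\pi}{2}$, and hence $B_{x_N}(r^*)\cap B_{x_S}(\frac{1}{2})=\emptyset$ in $(X,G)$.

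Now suppose, for contradiction, that $V_{\mathrm{far}} := V(\Omega\cap B_{x_S}(\frac{1}{2}))>0$, and set $A_\lambda := \{q\in N : r(q)>\pi-\frac{1}{2}\}$. Each slice with $q\in A_\lambda$ contributes at least $V(D_{x_N}(\pi-\frac{1}{2}))-V(D_{x_N}(r^*)) =: C_1>0$ to $V(\Omega\setminus B_{x_N}(r_{v_0}))$, so Lemma~\ref{L1} forces $H^n(A_\lambda)\leq \varepsilon/C_1$, which can be made smaller than the constant $w_0$ of Lemma~\ref{a0} applied to $(N,h)$ by enlarging $\lambda$. I would then replay the coarea argument from the proof of Lemma~\ref{L1} on the radial range $s\in(\lambda^n(\pi-\frac{1}{2}),\lambda^n\pi)$: the projected level sets $S^\lambda_s := (K^\lambda_{v_0}\cap\partial B^\lambda(s))^*\subset N_\lambda$ satisfy $\tau_\lambda(S^\lambda_s)\subset A_\lambda$, hence $H^n(\tau_\lambda(S^\lambda_s))<w_0$ in $(N,h)$, and Lemma~\ref{a0} gives $H^{n-1}(\partial S^\lambda_s)\geq a_0\lambda^m H^n(S^\lambda_s)$ in $N_\lambda$. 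Combining with the slice identity (\ref{projection}) and the coarea formula yields
\[
H^{m+n-1}(\partial K^\lambda_{v_0})\geq a_0\lambda^m V_{\mathrm{far}},
\]
and comparing with the upper bound (\ref{upper_bound}) gives $V_{\mathrm{far}}\leq C\lambda^{-(m+n)}$ for some constant $C=C(v_2)>0$.

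To promote this decay into the exact equality $V_{\mathrm{far}}=0$ I would build a cut-and-paste competitor: remove the far piece $K^\lambda_{v_0}\cap B^\lambda_{\lambda^n x_S}(\frac{1}{2}\lambda^n)$ and reattach a thin shell of equal volume $V_{\mathrm{far}}$ to the main body $K^\lambda_{v_0}\setminus B^\lambda_{\lambda^n x_S}(\frac{1}{2}\lambda^n)$. The removed far piece has boundary area at least of order $V_{\mathrm{far}}^{(m+n-1)/(m+n)}$ by the local isoperimetric inequality inside the southern cylinder, whereas the thin shell adds only $O(V_{\mathrm{far}})$ of perimeter relative to $\partial K^\lambda_{v_0}$. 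For $V_{\mathrm{far}}$ sufficiently small the savings strictly exceed the cost, producing a competitor of strictly smaller perimeter and contradicting the minimality of $K^\lambda_{v_0}$. Since the previous step already gives $V_{\mathrm{far}}\leq C\lambda^{-(m+n)}\to 0$, this contradiction forces $V_{\mathrm{far}}=0$ for all $\lambda>\lambda_0$. I expect the main obstacle to be this last step: carrying out the cut-and-paste with constants uniform in $\lambda$ and $v_0\in[v_1,v_2]$, and in particular controlling the anisotropic local isoperimetric constant of the southern cylinder $B^\lambda_{\lambda^n x_S}(\frac{1}{2}\lambda^n)$ as $\lambda\to\infty$.
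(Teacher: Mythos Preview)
Your first step is sound and in fact already contained in the paper's Lemma~\ref{L1}: once $H^n(A_\lambda)<w_0$ you recover the coarea bound and get $V_{\mathrm{far}}\le C\lambda^{-(m+n)}$ exactly as you describe. The gap is entirely in the second step, and it is not the anisotropic isoperimetric constant you flag --- that difficulty is handled cleanly by pulling back to $(X,G)$ via $T_\lambda^{-1}$ and using (\ref{HT}), which gives the $\lambda$-independent constant $\kappa_0$ of Lemma~\ref{c0} at the price of a factor $\lambda^{-n}$ that matches the other terms.

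The real problem is that cutting at the \emph{fixed} radius $r=\tfrac12$ creates an uncontrolled new boundary piece $K^\lambda_{v_0}\cap\partial B^\lambda_{\lambda^n x_S}(\tfrac12\lambda^n)$. The local isoperimetric inequality bounds the \emph{total} boundary of the far piece, i.e.\ the sum $A(\partial K^\lambda\cap B^\lambda)+A(\text{cut})$, not the savings $A(\partial K^\lambda\cap B^\lambda)$ alone; if the far piece happens to meet the sphere $\partial B^\lambda_{\lambda^n x_S}(\tfrac12\lambda^n)$ in large measure, your competitor could have \emph{more} perimeter, not less. Nothing in the setup rules this out at a single radius. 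The paper's device is to let the cut radius $r$ vary over $[\tfrac12,1]$, set $w(r)=V(\Omega_\lambda\cap B_{x_S}(r))$ so that the cut area is $\lambda^{-n}w'(r)$ by (\ref{HT2}) and coarea, and use as the volume-restoring ``thin shell'' the radial dilation $F_{\beta(r)}$ with $\beta(r)=(v_0/(v_0-w(r)))^{1/m}$, whose perimeter cost is exactly the factor $\beta(r)^m$ by (\ref{HFb}). Comparing with $I_{X_\lambda}(v_0)=A(\partial K^\lambda)$ then yields the differential inequality
\[
\frac{\kappa_0}{2}\ \le\ 2\,\frac{w'(r)}{w(r)^{(n+m-1)/(n+m)}},\qquad r\in[\tfrac12,1],
\]
which upon integration forces $w(\tfrac12)=0$ once $w(1)<\epsilon$ is small enough. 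The integration in $r$ is precisely what absorbs the cut term that your fixed-radius argument cannot handle.
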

 
 \begin{proof}
 	 	Let $v_0\in [v_1,v_2]$. We denote by $K^{\lambda}\subset X_{\lambda}$, $\lambda>1$,  a symmetrized isoperimetric region of $X_{\lambda}$, with volume $v_0$.
 	 Let $\Omega_{\lambda}=T_{\lambda^{-1}}(K^{\lambda})\subset X$. Let $\epsilon>0$ be  small enough so that

\begin{equation}\label{eps1}
	\epsilon<\min\{\frac{v_1}{2\pi},  \left(\frac{\kappa_0}{2 }  \frac{v_1}{V_N} \right)^{(n+m)} \left( I_{S^m}\left(\frac{V_M}{2}\right) \right)^{-(n+m)}, \left(\frac{\kappa_0}{4(n+m)}\right)^{n+m}\}
\end{equation}
 	
 	
 \noindent	where $\kappa_0$ is a constant for the volume $v_2$ on the manifold $(X,G)$ as in Lemma \ref{c0}.

Let $r_0>0$ be such that $V(B_{x_N}(r_0))=v_0$. Note that $r_0\leq\pi/2$, since $v_0< \frac{V_MV_N}{2}$.  By  Lemma \ref{L1}, there is some $\lambda_1>1$ such that $V(\Omega_{\lambda}^v \Delta B_{x_N}(r_v))<\epsilon$, for $\lambda>\lambda_1$ and $v\in [v_1,v_2]$. This implies $V(B_{x_S}(1)\cap \Omega_{\lambda}^v)<\epsilon$, since $B_{x_S}(1)\subset X\setminus 	B_{x_S}(r_v)$.



Let $\lambda_0>\lambda_1$ and $\lambda>\lambda_0$. Let $w(r)=V(B_{x_S}(r)\cap \Omega_{\lambda})$, for $\frac{1}{2} \leq r\leq  1$. Since $w(r)$ is non decreasing

\begin{equation}\label{wr0}		
	w(r)\leq w(1)=V(B_{x_S}(1)\cap \Omega_{\lambda})\leq \epsilon  < \frac{v_1}{2\pi} \leq  \frac{v_0}{2\pi}.  
\end{equation}

Also,  by the coarea formula,  when $w'(r)$ exists we have:

\begin{equation}{\label{coarea}}
	w'(r)=\frac{d}{dr}\int_{0}^{r} H^{m+n-1}(\partial B_{x_S}(t)\cap \Omega_{\lambda}) dt= H^{m+n-1}(\partial B_{x_S}(r)\cap \Omega_{\lambda})
\end{equation}

We will now construct a closed region $Q^{\lambda}(r)\subset X_{\lambda}$ of volume $v_0$, from which we will deduce some inequalities.  From these, assuming $V(B_{x_S}(\frac{1}{2})\cap \Omega_{\lambda})>0$ will lead to a contradiction.


Let $\beta(r)=(\frac{v_0}{v_0-w(r)})^{\frac{1}{m}} \geq 1$. By eq. (\ref{wr0}), $v_0-w(r)>0$, so that $\beta(r)$ is well defined.

For $r \in[ \frac{1}{2} ,1]$, let $U^{\lambda}(r)=F_{\beta(r)}(\Omega_{\lambda}\setminus B_{x_S}(r)  )$, being $F_{\beta}$ the transformation defined in equation (\ref{Fb}).

We claim that $F_{\beta(r)}$ is well defined within $S^m\times N$. 	 First note that for each $(p,q) \in \Omega_{\lambda}\setminus B_{x_S}(r)\subset(S^m\times N) $, $p\in S^m$, $q\in N$, we have $|p|\leq \pi-r$, being $|p|$ the distance to $x_N$.

Next, note that, for $m\geq 2$, we have: $1-\frac{1}{2\pi}>(1-\frac{1}{2\pi})^m$. This, in turn, implies
$\frac{1}{2\pi}<1-(1-\frac{1}{2\pi})^m$. Then,  for $\frac{1}{2} \leq r\leq1$, eq. (\ref{wr0}) yields
$$w(r)< \left(1-\left(1-\frac{1}{2\pi}\right)^m\right) v_0$$

which implies $\left(\frac{v_0}{v_0-w(r)}\right)< \left(\frac{\pi}{\pi-\frac{1}{2}}\right)^m$.
 Direct computation then yields,
$$\beta(r)|p|=\left(\frac{v_0}{v_0-w(r)}\right)^{\frac{1}{m}}|p|< \left(\frac{\pi}{\pi-\frac{1}{2}}\right)(\pi-r)<\pi,$$

\noindent  for each  $(p,q) \in \Omega_{\lambda}\setminus B_{x_S}(r)$, $\frac{1}{2}\leq r\leq 1$; proving our claim that $F_{\beta}(p,q)=(\beta p,q) \in (S^m\times N)$.

Note also that, by construction, $V(U^{\lambda}(r))=v_0$, for all $\frac{1}{2} \leq r\leq1$.

Let 
$$Q^{\lambda}(r)=T_{\lambda}(U^{\lambda}(r))\subset X_{\lambda}$$
Note that $V(Q^{\lambda}(r))=v_0$. Also 
$$Q^{\lambda}(r)=F_{\beta(r)}(T_{\lambda}(\Omega_{\lambda})\setminus T_{\lambda}(B_{x_S}(r)))$$
$$=F_{\beta(r)}(K^{\lambda}\setminus B_{ \lambda^n x_S}^{ \lambda}(\lambda^n r) )$$

  Hence, by eq. (\ref{Fb})

$$I_{X_{\lambda}}(v_0)\leq A(\partial Q^{\lambda}(r))\leq \beta(r)^m A(\partial (K^{\lambda}\setminus B_{ \lambda^n x_S}^{ \lambda}(\lambda^n r)) )$$

This is,

\begin{equation}\label{big}
	I_{X_{\lambda}}(v_0)\leq \left(\frac{v_0}{v_0-w(r)}\right)\left(A(\partial K^{\lambda})-A(\partial K^{\lambda}\cap B^{\lambda}_{\lambda^n x_S}(\lambda^n r))+2H^{n+m-1}(K^{\lambda}\cap \partial B^{\lambda}_{\lambda^n x_S}(\lambda^n r))\right) 
\end{equation}

We next bound each term of the right hand side of (\ref{big}),

\begin{enumerate}[i)]
	\item Since $K^{\lambda}$ is isoperimetric, $I_{X_{\lambda}}(v_0)=A(\partial K^{\lambda})$. 
	\item Using equation (\ref{HT}) first, and then,  since $w(r)<v_0$, Lemma \ref{c0} on $X$, with constant $\kappa_0=\kappa_0(v_2)$,
	 $$A(K^{\lambda}\cap B_{\lambda^n x_S}^{\lambda}(\lambda^n r))\geq \lambda^{-n} A(\Omega_{\lambda}\cap B_{x_S}(r)) )\geq \lambda^{-n} \kappa_0 \  w(r)^{\frac{n+m-1}{n+m}}.$$

	\item Being $\Omega_{\lambda}\cap \partial B_{x_S}(r) $  part of the cylinder $B_{x_S}(r)$, using equation (\ref{HT2}) first, and then,  equation (\ref{coarea}), we have
	
	$$H^{n+m-1}(K^{\lambda}\cap \partial B_{\lambda^n x_S}^{\lambda}(\lambda^n r))=\lambda^{-n}  H^{n+m-1}(\Omega_{\lambda}\cap \partial B_{x_S}(r)) )=\lambda^{-n} w'(r)$$
\end{enumerate}

So from eq. (\ref{big}) and the stated bounds i), ii) and iii), 
$$-w(r)I_{X_{\lambda}}(v_0)\leq -v_0 \lambda^{-n} \kappa_0 w(r)^{\frac{n+m-1}{n+m}}+2 v_0 \lambda^{-n} w'(r)$$
And then,
$$w(r)^{\frac{n+m-1}{n+m}}\left( \kappa_0 - \lambda^n w(r)^{\frac{1}{n+m}}  \frac{I_{X_{\lambda}}(v_0)}{v_0}\right)\leq 2 w'(r)$$

Note that $I_{X_{\lambda}}(v_0)\leq\lambda^{-n} V_N  I_{S^m}(\frac{v_0}{V_N})\leq \lambda^{-n} V_N  I_{S^m}(\frac{V_M}{2})$, by equation (\ref{upper_bound}), and since $ I_{S^m}(t)$ is increasing for $t\in (0,\frac{V_M}{2})$. Also, $v_1\leq v_0$ and $w(r)<\epsilon$. We get 




$$\kappa_0 -  \epsilon^{\frac{1}{n+m}}  \frac{V_N}{v_1} I_{S^m}\left(\frac{V_M}{2}\right)\leq 2 \frac{w'(r)}{w(r)^{\frac{n+m-1}{n+m}}}$$

Now, by eq. (\ref{eps1}), $\epsilon^{\frac{1}{n+m}}<  \frac{\kappa_0}{2 } \frac{v_1}{V_N} \left( I_{S^m}(\frac{V_M}{2}) \right)^{-1}$. Hence,

\begin{equation}\label{integrate1}
	\frac{\kappa_0}{2}\leq 2 \frac{w'(r)}{w(r)^{\frac{n+m-1}{n+m}}}
\end{equation}

From this,  assuming $V(B_{x_S}(\frac{1}{2})\cap \Omega_{\lambda})>0$, yields a contradiction in the following way. Note that $V(B_{x_S}(\frac{1}{2})\cap \Omega_{\lambda})>0$ implies  $w(r)>0$ for all $r\in [\frac{1}{2},1]$, since, otherwise, being $w(r)$ non decreasing, this would yield $w(\frac{1}{2})=0$, i.e., $V(B_{x_S}(\frac{1}{2})\cap \Omega_{\lambda})=0$.  Then, if we integrate (\ref{integrate1}) on $[\frac{1}{2},1]$, we get

$$ \frac{\kappa_0}{8}\leq (n+m) (w(1)^{\frac{1}{n+m}}-w(\frac{1}{2})^{\frac{1}{n+m}})\leq (n+m) w(1)^{\frac{1}{n+m}}\leq (n+m) \epsilon^{\frac{1}{n+m}}, $$

\noindent which contradicts eq. (\ref{eps1}), $\epsilon<(\frac{\kappa_0}{8(n+m)})^{n+m}$.

It follows that $V(B_{x_S}(\frac{1}{2})\cap \Omega_{\lambda})=0$. Using the transformation $T_{\lambda}$ we obtain the conclusion of the Lemma,

$$0=V(B_{x_S}(\frac{1}{2})\cap \Omega_{\lambda})=V(T_{\lambda}(B_{x_S}(\frac{1}{2})\cap \Omega_{\lambda}))= V(B_{\lambda^n x_S}^{\lambda}(\frac{1}{2}\lambda^n )\cap K^{\lambda}).$$

 \end{proof}

 \begin{Lemma}\label{increasing}
 	 Let $v_1,v_2\in (0,\frac{V_MV_N}{2})$, $v_1<v_2$. Then, there is some $\lambda_0>1$, such that, for any $\lambda>\lambda_0$, 
 	 $I_{X_{\lambda}}(w)$ is non-decreasing for  $w\in [v_1,v_2]$. 
 	
 \end{Lemma}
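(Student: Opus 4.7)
The plan is to use the geodesic-shrinking map $F_\beta$ with $\beta < 1$ to convert a symmetrized isoperimetric region of volume $v'$ into an admissible competitor of smaller volume $v$ whose area is at most $I_{X_\lambda}(v')$. The groundwork has already been laid by Lemma \ref{emptyend}: there exists $\lambda_0 > 1$ such that, for every $\lambda > \lambda_0$ and every $v \in [v_1, v_2]$, the symmetrized isoperimetric region $K^\lambda_v \subset X_\lambda$ satisfies $V(B^\lambda_{\lambda^n x_S}(\frac{1}{2} \lambda^n) \cap K^\lambda_v) = 0$. Since $F_\beta$ is well-defined on the complement of the antipodal fiber whenever $\beta \in (0,1)$, this uniform antipode-avoidance is exactly what we need to freely apply the shrinking map.

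Fix $\lambda > \lambda_0$ and $v,v' \in [v_1, v_2]$ with $v < v'$. Let $K^\lambda_{v'}$ be a symmetrized isoperimetric region of volume $v'$ and set $\beta = (v/v')^{1/m} \in (0,1)$. Mimicking the construction used in Lemma \ref{emptyend}, I would define the competitor
\[
Q^\lambda \;=\; T_\lambda \bigl( F_\beta ( T_\lambda^{-1} ( K^\lambda_{v'} ) ) \bigr) \;\subset\; X_\lambda,
\]
which can equivalently be described as $F_\beta(K^\lambda_{v'})$ with $F_\beta$ acting on the sphere factor of $X_\lambda$. Because $T_\lambda$ preserves volume and $V(F_\beta(E)) = \beta^m V(E)$, the set $Q^\lambda$ has volume $\beta^m v' = v$. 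The reversed form of (\ref{HFb}), valid for $\beta \in (0,1)$, gives $A(\partial F_\beta(\Sigma)) \leq \beta^{m-1} A(\partial \Sigma)$, and hence
\[
I_{X_\lambda}(v) \;\leq\; A(\partial Q^\lambda) \;\leq\; \beta^{m-1} A(\partial K^\lambda_{v'}) \;=\; \beta^{m-1} I_{X_\lambda}(v').
\]
Since $m \geq 2$ and $\beta \in (0,1)$ force $\beta^{m-1} \leq \beta < 1$, we conclude $I_{X_\lambda}(v) \leq I_{X_\lambda}(v')$, which is the monotonicity claim.

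The real content is packaged inside Lemma \ref{emptyend}: without uniform antipode-avoidance across $v \in [v_1, v_2]$, the contraction $F_\beta$ would not be guaranteed to keep its image inside the domain where it is defined, and the whole construction would collapse. With that lemma in hand, the rest is just a direct application of the scaling estimates for $F_\beta$, and the factor $\beta^{m-1} < 1$ actually delivers a strict decrease, yielding the slightly stronger conclusion that $I_{X_\lambda}$ is strictly increasing on $[v_1, v_2]$ for $\lambda > \lambda_0$.
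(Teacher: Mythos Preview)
Your proof is correct and follows essentially the same route as the paper: invoke Lemma \ref{emptyend} to guarantee that the symmetrized isoperimetric region $K^{\lambda}_{v'}$ stays away from the antipodal fibre, apply the contraction $F_\beta$ with $\beta=(v/v')^{1/m}\in(0,1)$ to produce a competitor of volume $v$, and use the reversed inequality (\ref{HFb}) to obtain $I_{X_\lambda}(v)\le \beta^{m-1} I_{X_\lambda}(v')\le I_{X_\lambda}(v')$. The only cosmetic difference is that you thread the construction through $T_\lambda$ and $T_\lambda^{-1}$ explicitly, whereas the paper applies $F_\beta$ directly on $X_\lambda$; your closing remark that the inequality is in fact strict is a harmless strengthening.
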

 
 \begin{proof}
  By Lemma \ref{emptyend} there is some $\lambda_0>1$, such that, for any $\lambda>\lambda_0$, any  $w\in[v_1,v_2]$,
  \begin{equation}\label{condition}
  	V(B_{\lambda^n x_S}^{\lambda}(\frac{1}{2}\lambda^n)\cap K_w^{\lambda})=0,
  \end{equation}
  
  \noindent where $K^{\lambda}_w$ is a symmetrized isoperimetric region on $X_{\lambda}$ of volume $w$.
  Let $w_1,w_2>0$, be such that $v_1\leq w_1<w_2\leq v_2$ and let $K^{\lambda}_{w_2}\subset X_{\lambda}$, $\lambda>\lambda_0$, be a symmetrized isoperimetric region of $X_{\lambda}$ of volume $w_2$. Let $0<\beta<1$ such that $V(F_{\beta}(K^{\lambda}_{w_2}))=w_1$, where $F_{\beta}$ is  a function as in   eq. (\ref{Fb}). 
  By eq. (\ref{condition}), $ K_w^{\lambda}\subset B_{x_N}^{\lambda}(\lambda^n(\pi-\frac{1}{2}))$, so that $F_{\beta}$ is well defined on $K^{\lambda}_{w_2}$.   By eq. (\ref{HFb}) we have $A (F_{\beta}(K^{\lambda}_{w_2}))\leq \beta^{m-1}  A (K^{\lambda}_{w_2})$. Hence

  $$I_{X_{\lambda}}(w_1)\leq A (F_{\beta}(K^{\lambda}_{w_2}))\leq \beta^{m-1}  A (K^{\lambda}_{w_2})=\beta^{m-1} I_{X_{\lambda}}(w_2)\leq I_{X_{\lambda}}(w_2)$$

\noindent for $m\geq 2$, since $K^{\lambda}_{w_2}$ is isoperimetric of volume $w_2$, and $\beta\in (0,1)$.  This shows that $I_{X_{\lambda}}(w)$ is non-decreasing for  $w\in (v_1,v_2)$. 
 	
 \end{proof}

  	
  	We now use density estimates to improve the $L^1$ convergence to Hausdorff convergence. We follow the work of Ritor\'e and Vernadakis \cite{RiVer}. See also \cite{LeoRig} and \cite{DavSemm}.
  	
  Given $\Omega\subset X$, let $h_{\Omega}:(S^m,g_0)\times (0,\infty)\rightarrow \re^+$, be a function given by
  	
  	$$h_{\Omega}(p,R)= \frac{\min\{V(B_p(R)\cap \Omega), V(B_p(R)\setminus \Omega)\}}{R^{m+n}}$$	


 \begin{Lemma}\label{densities}
 Let $v_0\in (0, \frac{V_N V_M}{2})$. Let $\epsilon>0$ be small enough so that
 \begin{equation} 	\label{eps} 	
 	\epsilon<\min\{\frac{v_0}{2\pi},\left(\frac{\kappa_0}{2 }  \frac{v_0}{V_N} \right)^{(n+m)} \left( I_{S^m}\left(\frac{V_M}{2}\right) \right)^{-(n+m)},  \left(\frac{\kappa_0}{8(n+m)}\right)^{n+m}\}
\end{equation}
 
 \noindent	where $\kappa_0$ is a constant for the volume $\frac{V_N V_M}{2}$ on the manifold $(X,G)$ as in Lemma \ref{c0}.
 
 	  Let $0<R_0<1$ be a constant so that  a disk $D(R_0)\subset(S^m,g_0)$ has volume  $ V(D(R_0))$ such that $v_1=V_N H^m(D(R_0)) +v_0<\frac{V_MV_N}{2}$.
 
 Let  $\lambda_0>1$ be such that both, the  conclusions on Lemma \ref{emptyend} and on Lemma \ref{increasing}, are satisfied for $v\in [v_0,v_1]$ and $\lambda>\lambda_0$.  		Let $K^{\lambda}\subset X_{\lambda}$ denote  a symmetrized isoperimetric region of $X_{\lambda}$, with volume $v_0$.  		 Let  $\Omega_{\lambda}=T_{\lambda^{-1}}(K^{\lambda})\subset X$.
 	
 	Then, for any $p\in (S^m,g_0)$, $R \in(0,R_0)$, such that $h_{\Omega_{\lambda}}(p,R)\leq \epsilon$, we have $h_{\Omega_{\lambda}}(p,\frac{R}{2})=0$.
 	
 	Moreover, if $h_{\Omega_{\lambda}}(p,R)= \frac{V(B_p(R)\cap \Omega_{\lambda})}{R^{m+n}}$, then $V(B_p(\frac{R}{2})\cap \Omega_{\lambda})=0$. On the other hand, if 
 	$h_{\Omega_{\lambda}}(p,R)= \frac{V(B_p(R)\setminus \Omega_{\lambda})}{R^{m+n}}$, then $ V(B_p(\frac{R}{2})\setminus \Omega_{\lambda})=0$.

 \end{Lemma}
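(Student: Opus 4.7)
My plan is to argue by contradiction in a localized version of the strategy used in Lemma \ref{emptyend}. I treat the case $h_{\Omega_\lambda}(p,R) = V(B_p(R)\cap\Omega_\lambda)/R^{m+n}$; the complementary case is symmetric. Set $w(r) = V(B_p(r)\cap\Omega_\lambda)$ for $r\in[R/2,R]$. This is nondecreasing with $w(r)\le\epsilon R^{m+n}\le\epsilon<v_0/(2\pi)$ by the first bound in (\ref{eps}). Suppose $w(R/2)>0$; then $w(r)>0$ throughout $[R/2,R]$. Set $\beta(r) = (v_0/(v_0-w(r)))^{1/m}\ge 1$; a direct calculation gives $\beta(r)^m\le 2\pi/(2\pi-1) = \pi/(\pi-1/2)$. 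Lemma \ref{emptyend} yields $\Omega_\lambda\subset X\setminus B_{x_S}(1/2)$, so every point of $\Omega_\lambda\setminus B_p(r)$ has $S^m$-distance at most $\pi-1/2$ from $x_N$, placing $\Omega_\lambda\setminus B_p(r)$ in the domain of $F_{\beta(r)}$. The competitor $Q^\lambda(r) = T_\lambda(F_{\beta(r)}(\Omega_\lambda\setminus B_p(r)))\subset X_\lambda$ thus has volume $v_0$.

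Since the conjugation $T_\lambda\circ F_\beta\circ T_\lambda^{-1}$ reduces to $(p,q)\mapsto(\beta p,q)$ acting on $X_\lambda$, it scales areas by at most $\beta^m$ exactly as in (\ref{HFb}). Isoperimetric minimality of $K^\lambda$ then gives
\begin{equation*}
I_{X_\lambda}(v_0)\;\le\;\beta(r)^m\bigl[I_{X_\lambda}(v_0) - P(r) + S(r)\bigr],
\end{equation*}
with $P(r)=A_{X_\lambda}(\partial K^\lambda\cap T_\lambda(B_p(r)))$ and $S(r)=H^{m+n-1}(K^\lambda\cap\partial T_\lambda(B_p(r)))$. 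Using $1-1/\beta(r)^m=w(r)/v_0$, the upper bound (\ref{upper_bound}) $I_{X_\lambda}(v_0)\le\lambda^{-n}V_N I_{(S^m,g_0)}(V_M/2)$, and $S(r)=\lambda^{-n}w'(r)$ (from (\ref{HT2}) applied to the cylindrical $\partial T_\lambda(B_p(r))$ together with the coarea formula), I obtain $\lambda^n P(r)\le \frac{V_N I_{(S^m,g_0)}(V_M/2)}{v_0}\,w(r) + w'(r)$.

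Next apply Lemma \ref{c0} to $(X,G)$ with volume $V_MV_N/2$ and constant $\kappa_0$: the set $\Omega_\lambda\cap B_p(r)$ of volume $w(r)$ satisfies $H^{m+n-1}(\partial\Omega_\lambda\cap B_p(r)) + w'(r)\ge\kappa_0 w(r)^{(m+n-1)/(m+n)}$. Combined with $H^{m+n-1}(\partial\Omega_\lambda\cap B_p(r))\le\lambda^n P(r)$ from (\ref{HT}) and the bound just established, this produces
\begin{equation*}
\frac{V_N\,I_{(S^m,g_0)}(V_M/2)}{v_0}\,w(r) + 2w'(r)\;\ge\;\kappa_0 w(r)^{(m+n-1)/(m+n)}.
\end{equation*}
The middle bound in (\ref{eps}) forces the first term to be at most $(\kappa_0/2)w(r)^{(m+n-1)/(m+n)}$, producing the ODE $w'(r)\ge(\kappa_0/4)w(r)^{(m+n-1)/(m+n)}$. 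Integrating over $[R/2,R]$ yields $w(R)^{1/(m+n)}\ge\kappa_0 R/[8(m+n)]$; but $w(R)\le\epsilon R^{m+n}$ combined with the last bound in (\ref{eps}) implies $w(R)^{1/(m+n)}<\kappa_0 R/[8(m+n)]$, a contradiction. Hence $w(R/2)=0$.

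The complementary case uses $\tilde w(r)=V(B_p(r)\setminus\Omega_\lambda)$ and the dual competitor $T_\lambda(F_{\beta(r)}(\Omega_\lambda\cup B_p(r)))$ with $\beta(r)=(v_0/(v_0+\tilde w(r)))^{1/m}<1$; its domain $\mathring D_{x_N}(\pi)\times N$ contains the relevant set since $x_S\notin\Omega_\lambda$ by Lemma \ref{emptyend}, and an identical ODE derivation yields $\tilde w(R/2)=0$. The main obstacle is packaging the competitor so that the area scaling (\ref{HFb}) transfers faithfully through $T_\lambda$ (handled via the conjugation identity above), and matching each of the three constraints in (\ref{eps}) with its distinct role: the first controls the domain of $F_\beta$, the second kills the linear term in the differential inequality, and the third delivers the final integral contradiction.
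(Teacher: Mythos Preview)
Your Case 1 is correct and matches the paper's argument up to bookkeeping: the paper subtracts $A(\partial(K^\lambda\cap B^\lambda))$ and hence needs the factor $2$ in front of the cylindrical term, whereas you subtract only $A(\partial K^\lambda\cap B^\lambda)$ and recover the missing copy when you invoke Lemma~\ref{c0}. Either way the differential inequality $w'\ge(\kappa_0/4)w^{(m+n-1)/(m+n)}$ follows and the contradiction is the same.

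Your Case 2, however, has a genuine gap. The domain check ``$x_S\notin\Omega_\lambda$ by Lemma~\ref{emptyend}'' is not enough: the competitor is built from $\Omega_\lambda\cup B_p(r)$, and nothing in the hypotheses prevents $x_S\in D_p(r)$. In that situation $F_\beta$ with $\beta<1$ blows the point $x_S$ up to the entire sphere $\partial D_{x_N}(\beta\pi)$, so the image $F_\beta\bigl((\Omega_\lambda\cup B_p(r))\setminus(\{x_S\}\times N)\bigr)$ acquires an extra boundary component $\partial D_{x_N}(\beta\pi)\times N$ whose area, after $T_\lambda$, is of order $\lambda^{-n}V_N I_{(S^m,g_0)}(\cdot)$ --- comparable to $I_{X_\lambda}(v_0)$ itself. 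This destroys the perimeter estimate and the ``identical ODE derivation'' does not go through.

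This is exactly why the statement assumes Lemma~\ref{increasing} on the interval $[v_0,v_1]$, a hypothesis you never use. The paper's Case~2 avoids $F_\beta$ altogether: it takes the raw union $K^\lambda\cup B^\lambda_{\lambda^n p}(\lambda^n r)$, whose volume lies in $[v_0,v_1]$ by the choice of $R_0$, and uses the monotonicity $I_{X_\lambda}(v_0)\le I_{X_\lambda}(V(K^\lambda\cup B^\lambda))\le A(\partial(K^\lambda\cup B^\lambda))$ directly. This bypasses any domain issue and yields $\kappa_0\tilde w^{(m+n-1)/(m+n)}\le 2\tilde w'$ immediately.
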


\begin{proof}


Let  $p\in (S^m,g_0)$, $R \in(0,R_0)$. We  proceed by cases.


\begin{description}

	\item[Case 1] \textit{Suppose $h_{\Omega_{\lambda}}(p,R)= V(B_p(R)\cap \Omega_{\lambda})R^{-(m+n)}$.}

	Let $w(r)=V(B_p(r)\cap \Omega_{\lambda})$, $0<r\leq R< 1$. Being $w(r)$   non decreasing, and by hypothesis on $\epsilon$ and $R$,

	\begin{equation}\label{wr}		
	w(r)\leq w(R)=V(B_p(R)\cap \Omega_{\lambda})\leq \epsilon R^{n+m} <\epsilon<  \frac{v_0}{2\pi}<v_0
\end{equation}

By the coarea formula, when $w'(r)$ exists:
	
	\begin{equation}{\label{coarea2}}
		w'(r)=\frac{d}{dr}\int_{0}^{r} H^{m+n-1}(\partial B_p(t)\cap \Omega_{\lambda}) dt= H^{m+n-1}(\partial B_p(r)\cap \Omega_{\lambda})
	\end{equation}
	
	We now construct a  region $U^{\lambda}(r)\subset X_{\lambda}$ of volume $v_0$, from which we will deduce some inequalities.

	Let $\beta(r)=(\frac{v_0}{v_0-w(r) })^{\frac{1}{m}} \geq 1$. By eq. (\ref{wr}), $v_0-w(r) >0$. 	Let $U(r)=F_{\beta(r)}(\Omega_{\lambda}\setminus B_{p}(r) )$, where $F_{\beta}$ is defined in (\ref{Fb}). We show that this set is well defined.
	
By hypothesis, Lemma \ref{emptyend} is satisfied by $\lambda$, so that $V(\Omega_{\lambda}\cap B_{x_S}(\frac{1}{2}))=0$. Hence, for each $(x,y) \in \Omega_{\lambda}\setminus B_{p}(R)$, $x\in(S^m,g_0)$, $y \in (N,h)$, we have $|x|\leq \pi-\frac{1}{2}$, being $|x|$ the distance from $x$ to $x_N$.
	
	 By eq. (\ref{wr}), and direct computation we have, $w(r)<\epsilon<\frac{v_0}{2\pi}< v_0(1-\left(1-\frac{1}{2\pi}\right)^m)$, for $r\in [\frac{R}{2},R]$ and $m\geq2$. Direct computation yields,
	$$\beta(r)|x|=\left(\frac{v_0}{v_0-w(r)}\right)^{\frac{1}{m}}|x|< \left(\frac{v_0}{v_0-w(r)}\right)^{\frac{1}{m}}\left(\pi-\frac{1}{2}\right)<\pi$$
	
	This is, for each $(x,y) \in \Omega_{\lambda}\setminus B_{p}(R)$, $x\in(S^m,g_0)$, $y \in (N,h)$, $F_{\beta}(x,y)=(\beta x, y) \in X$.

	Let $U^{\lambda}(r)=T_{\lambda}(U(r))\subset X_{\lambda}$.	By construction, $V(U^{\lambda}(r))=V(U(r))=V(\Omega)=v_0$, for   $r\in [\frac{R}{2},R]$.

	Note that 
	$$U^{\lambda}(r)=F_{\beta(r)}(T_{\lambda}(\Omega_{\lambda})\setminus T_{\lambda}B_{p}(r))=F_{\beta(r)}(K^{\lambda}\setminus B^{\lambda}_{\lambda^n p}(\lambda^n r))$$
	
	 Hence, by eq. (\ref{Fb})
	
	$$I_{X_{\lambda}}(v_0)\leq A(U^{\lambda}(r))\leq \beta(r)^m A(K^{\lambda}\setminus B^{\lambda}_{\lambda^n p}(\lambda^n r)).$$
	
	This is 
	
	\begin{equation}\label{principal}
	I_{X_{\lambda}}(v_0)\leq \left(\frac{v_0}{v_0-w(r)}\right)(A(K^{\lambda})-A(K^{\lambda}\cap B^{\lambda}_{\lambda^n p}(\lambda^n r))+2H^{n+m-1}(K^{\lambda}\cap \partial B^{\lambda}_{\lambda^n p}(\lambda^n r))		
	\end{equation}

	We now state a bound for each term in (\ref{principal}). Since $K^{\lambda}$ is isoperimetric, $I_{X_{\lambda}}(v_0)=A(K^{\lambda})$. 
	By eq. (\ref{HT}) and then, using the constant $\kappa_0$ from the hypothesis, by Lemma \ref{c0},
	
\begin{equation}\label{cap}
	A(K^{\lambda}\cap  B^{\lambda}_{\lambda^n p}(\lambda^n r))\geq \lambda^{-n} A(\Omega_{\lambda}\cap B_{p}(r)) )\geq \lambda^{-n} \kappa_0 \  w(r)^{\frac{n+m-1}{n+m}}. 
\end{equation}
	
	On the other hand, since $\Omega_{\lambda}\cap \partial B_{p}(r) $ is part of the cylinder $B_{p}(r)$,
	
	\begin{equation}\label{cyl}
		H^{n+m-1}(K^{\lambda}\cap \partial  B^{\lambda}_{\lambda^n p}(\lambda^n r)  )=\lambda^{-n}  H^{n+m-1}(\Omega_{\lambda}\cap \partial B_{p}(r)) )=\lambda^{-n} w'(r)
	\end{equation}


	

	So, using (\ref{cyl}) and (\ref{cap}) in (\ref{principal}) we get
	$$-w(r)I_{X_{\lambda}}(v_0)\leq -v_0 \lambda^{-n} \kappa_0 w(r)^{\frac{n+m-1}{n+m}}+2 \lambda^{-n} w'(r)v_0$$
	This is,
	$$w(r)^{\frac{n+m-1}{n+m}}\left( \kappa_0 - \lambda^n w(r)^{\frac{1}{n+m}}  \frac{I_{X_{\lambda}}(v_0)}{v_0}\right)\leq 2 w'(r)$$

	
	Now, $I_{X_{\lambda}}(v_0)\leq\lambda^{-n} V_N  I_{S^m}(\frac{v_0}{V_N})\leq \lambda^{-n} V_N  I_{S^m}(\frac{V_M}{2})$, by equation (\ref{upper_bound}), and since $ I_{S^m}(t)$ is increasing for $t\in (0,\frac{V_M}{2})$. Also, $w(r)<\epsilon$. Hence
	
	


	$$\kappa_0 -  \epsilon^{\frac{1}{n+m}}  \frac{V_N}{v_0} I_{S^m}\left(\frac{V_M}{2}\right)\leq 2 \frac{w'(r)}{w(r)^{\frac{n+m-1}{n+m}}}$$

By hypothesis, eq. (\ref{eps}), $\epsilon^{\frac{1}{n+m}}<  \frac{\kappa_0}{2 } \frac{v_0}{V_N} \left( I_{S^m}(\frac{V_M}{2}) \right)^{-1}$. Hence,

	\begin{equation}\label{integrate2}
		\frac{\kappa_0}{2}\leq 2 \frac{w'(r)}{w(r)^{\frac{n+m-1}{n+m}}}
	\end{equation}

This yields the following contradiction. First, $w(r)>0$ for all $r\in [\frac{R}{2},R]$, since, otherwise, being $w(r)$ non decreasing, we would have $w(\frac{R}{2})=0$, concluding the proof.  We integrate (\ref{integrate2}) on $[\frac{R}{2},R]$, and get

$$\kappa_0 \frac{R}{8}\leq (n+m) (w(R)^{\frac{1}{n+m}}-w(R/2)^{\frac{1}{n+m}})\leq (n+m) w(R)^{\frac{1}{n+m}}\leq (n+m) \epsilon^{\frac{1}{n+m}} R$$

	which contradicts the hypothesis $\epsilon<(\frac{\kappa_0}{8(n+m)})^{n+m}$.
	
	
		\item[Case 2] \textit {$h_{\Omega_{\lambda}}(p,R)= V(B_p(R)\setminus\Omega)R^{-(m+n)}$.}

	Let $w(r)=V(B_p(r)\setminus \Omega)$, $0<r\leq R< 1$. Since $w(r)$ is non decreasing, and by hypothesis on $\epsilon$ and $R$,

	\begin{equation}\label{wr2}		
		w(r)\leq w(R)=V(B_p(R)\setminus \Omega_{\lambda})\leq \epsilon R^{n+m} <\epsilon< \frac{v_0}{2\pi}<v_0
	\end{equation}

On $(S^m, g_0)$, using the coarea formula, when $w'(r)$ exists:

$$		w'(r)=\frac{d}{dr}\int_{0}^r H^{m+n-1}(\partial B_p(t)\setminus \Omega_{\lambda})dt=H^{m+n-1}(\partial B_p(r)\setminus \Omega_{\lambda})$$
	\begin{equation}{\label{coarea3}}		
		=\lambda^{n} H^{m+n-1}(\partial B_{\lambda^n p}^{\lambda}(\lambda^n r)\setminus K^{\lambda})
	\end{equation}

\noindent where the last inequality follows from  using the transformation $T_{\lambda}$ and since	$\partial B_{\lambda^n p}^{\lambda}(\lambda^n r)\setminus K^{\lambda}$ is part of the cylinder $B_{\lambda^n p}^{\lambda}(\lambda^n r)$ (see equation (\ref{HT2})). We now consider the following estimation for the set $B_{\lambda^n p}^{\lambda}(\lambda^n r) \cup K^{\lambda} $:

\begin{equation}\label{principal2}
	A( B_{\lambda^n p}^{\lambda}(\lambda^n r) \cup K^{\lambda}  )\leq A(K^{\lambda} )-  A(  B_{\lambda^n p}^{\lambda}(\lambda^n r)\setminus K^{\lambda})+2 H^{n+m-1}(\partial  B_{\lambda^n p}^{\lambda}(\lambda^n r)\setminus K^{\lambda})
\end{equation} 

We bound each term of the inequality. Since $K^{\lambda}$ is isoperimetric,  $A(K^{\lambda} )=I_{X_{\lambda}}(v_0)$. For the second term of the right hand side, using the transformation $T_{\lambda}$, and equation (\ref{HT}), note that

	$$ A( B_{\lambda^n p}^{\lambda}(\lambda^n r)\setminus K^{\lambda})\geq A(T_{\lambda}(B_p(R)\setminus \Omega_{\lambda}))$$
\begin{equation}\label{k00}
	\geq \lambda^{-n}A(B_p(R)\setminus \Omega_{\lambda})\geq \lambda^{-n} \kappa_0 w(r)^{\frac{n+m-1}{n+m}}
\end{equation}
	
	\noindent where the last inequality follows from Lemma \ref{c0}, for the constant $\kappa_0$ of the hypothesis, since $w(r)\leq v_0<\frac{V_NV_M}{2}$.

 Since $r\leq R\leq R_0$, and by hypothesis on $R_0$, we have 
 
 $$v_0\leq V( B_{\lambda^n p}^{\lambda}(\lambda^n r)\cup K^{\lambda})\leq V( B_{p}(r))+v_0$$
  
  $$\leq  V( B_{p}(R_0))+v_0<V_N V(D(R_0))+v_0=v_1<\frac{V_NV_M}{2}.$$

By hypothesis, $\lambda>\lambda_0$  satisfies Lemma \ref{increasing} for $ v\in [v_0,v_1]$.  Hence, $I_{X_{\lambda}}(v)$ is non-decreasing  for $v \in [v_0,v_1]$. Thus,
	
	$$A(K^{\lambda})=I_{X_{\lambda}}(v_0)\leq I_{X_{\lambda}}(V(  B_{\lambda^n p}^{\lambda}(\lambda^n r)\cup K^{\lambda}))\leq A( B_{\lambda^n p}^{\lambda}(\lambda^n r)\cup K^{\lambda})  $$
	
	This yields, together with (\ref{coarea3}), (\ref{principal2}) and (\ref{k00})
	
	$$I_{X_{\lambda}}(v_0)\leq A(  B_{\lambda^n p}^{\lambda}(\lambda^n r)\cup K^{\lambda})\leq I_{X_{\lambda}}(v_0)- \lambda^{-n} \kappa_0 w(r)^{\frac{n+m-1}{n+m}}+ 2 \lambda^{-n} w'(r)$$

	It follows that
	
	$$\frac{\kappa_0}{2}\leq \frac{ w'(r)}{ w(r)^{\frac{n+m-1}{n+m}}}$$
	
	We now show a contradiction. Similar to case 1, we assume  $w(r)>0$ for all $r\in[R/2,R]$, since, otherwise, being $w(r)$ non decreasing would imply $w(\frac{R}{2})=V(B_p(\frac{R}{2})\setminus \Omega_{\lambda})=0$, concluding the proof. Hence,  we integrate between $\frac{R}{2}$ and $R$ and get

	$$\kappa_0 \frac{R}{4}\leq (n+m) (w(R)^{\frac{1}{n+m}}-w(\frac{R}{2})^{\frac{1}{n+m}})\leq (n+m) w(R)^{\frac{1}{n+m}}\leq (n+m) \epsilon^{\frac{1}{n+m}} R$$
	
\noindent	since $w(R)\leq \epsilon R^{n+m}$, by eq. (\ref{wr2}). 	This contradicts the hypothesis $\epsilon<(\frac{\kappa_0}{8(n+m)})^{n+m}$.

\end{description}

\end{proof}

Let $K\subset X_{\lambda}$ be a closed region. For $\delta>0$, we define the $\delta-$thickening of the set $K$,  $(K)_\delta=\{z \in X_{\lambda}| d(z,K)\leq \delta\}$. We next prove that the $L^1$ convergence of symmetrized isoperimetric regions shown before (Lemma \ref{L1}) can be improved to Hausdorff convergence of their boundaries.

\begin{Lemma}\label{Hausdorff}
	
	Let $\{K^{\lambda_i}\}_{i\in \N}\subset X_{\lambda_i}$ be a sequence of isoperimetric sets in $X_{\lambda_i}$ with  volume $V(K^{\lambda_i})=v_0\in(0, \frac{V_MV_N}{2}$ and increasing $\lambda_i$, such that $\lim_{i \rightarrow \infty}\lambda_i=\infty$. Let $\Omega_i=T^{-1}_{\lambda_i}(K^{\lambda_i})\subset X$. Then, for every $\delta>0$, we have $\partial \Omega_i \subset (\partial B_{x_N}(r_0))_\delta$, for $\lambda_i$ large enough, where $r_0$ is such that $V(B_{x_N}(r_0))=v_0$.
\end{Lemma}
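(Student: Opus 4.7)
The plan is to argue by contradiction, combining the $L^1$-convergence of Lemma \ref{L1} with the density estimates of Lemma \ref{densities}, in the spirit of Ritor\'e--Vernadakis. Suppose the conclusion fails; then, passing to a subsequence, there exist $\delta_0>0$ and points $z_i=(p_i,q_i)\in\partial\Omega_i$ with $d_X(z_i,\partial B_{x_N}(r_0))>\delta_0$. Because $\partial B_{x_N}(r_0)=\partial D_{x_N}(r_0)\times N$ and $X$ carries the product metric, this distance equals $d_{(S^m,g_0)}(p_i,\partial D_{x_N}(r_0))$, so each $p_i$ lies in $D_{x_N}(r_0-\delta_0)$ or outside $D_{x_N}(r_0+\delta_0)$.

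Next I would fix the constants $R_0$ and $\epsilon$ supplied by Lemma \ref{densities} for the volume $v_0$ (which depend only on $v_0$ and the fixed ambient manifolds, not on $\lambda$), and set $R=\tfrac{1}{2}\min\{\delta_0,R_0\}$. With this choice the cylinder $B_{p_i}(R)=D_{p_i}(R)\times N$ is either contained in $B_{x_N}(r_0)$ or disjoint from it. Applying Lemma \ref{L1} with error $\epsilon R^{m+n}$, for $\lambda_i$ sufficiently large one has $V(\Omega_i\Delta B_{x_N}(r_0))<\epsilon R^{m+n}$. The dichotomy above then forces either $V(B_{p_i}(R)\setminus\Omega_i)<\epsilon R^{m+n}$ or $V(B_{p_i}(R)\cap\Omega_i)<\epsilon R^{m+n}$, so in either case
\[
h_{\Omega_i}(p_i,R)\le \epsilon.
\]
Lemma \ref{densities} then yields $h_{\Omega_i}(p_i,R/2)=0$.

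The contradiction is closed using regularity. By Theorem \ref{existence}, $\partial\Omega_i$ is smooth away from a singular set of low Hausdorff dimension, so I may take $z_i$ to be a regular boundary point of $\Omega_i$, at which every open neighborhood meets both $\Omega_i$ and its complement in positive volume. Since the cylinder $B_{p_i}(R/2)$ contains an ambient metric ball in $X$ around $z_i$, both $V(B_{p_i}(R/2)\cap\Omega_i)$ and $V(B_{p_i}(R/2)\setminus\Omega_i)$ are positive, so $h_{\Omega_i}(p_i,R/2)>0$, contradicting the previous step.

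I expect the main technical obstacle to be verifying uniformity of the constants in $\lambda_i$: one has to check that $R_0$, $\epsilon$, and $\kappa_0$ in Lemmas \ref{L1} and \ref{densities} depend only on $v_0$ and on the fixed factors $(S^m,g_0)$ and $(N,h)$, so that the density estimate applies simultaneously to every large $\lambda_i$. A secondary issue is that Lemma \ref{densities} works with cylinders rather than ambient metric balls; but a cylinder $B_p(R)$ always contains an $X$-metric ball around $(p,q)$ of radius comparable to $R$, which is what is needed to deduce $h_{\Omega_i}(p_i,R/2)>0$ at a regular boundary point.
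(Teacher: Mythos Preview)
Your proposal is correct and follows essentially the same route as the paper: argue by contradiction, use Lemma~\ref{L1} to make $h_{\Omega_i}(p_i,R)\le\epsilon$, apply the density dichotomy of Lemma~\ref{densities}, and contradict $z_i\in\partial\Omega_i$. The only cosmetic difference is that you work with a single fixed radius $R=\tfrac{1}{2}\min\{\delta_0,R_0\}$ and invoke Lemma~\ref{L1} with tolerance $\epsilon R^{m+n}$, whereas the paper passes to a shrinking sequence $\rho_i\to 0$ with $V(\Omega_i\Delta B_{x_N}(r_0))<\rho_i^{m+n+1}$; either choice closes the argument in the same way.
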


\begin{proof}

Since $v_0$, $m$  and $n$ are fixed, we choose a uniform $\epsilon>0$ so that Lemma \ref{densities} holds for all $i\in \N$ and the same $\epsilon$.  It follows from said Lemma that for each $p\in X$, $0\leq \rho\leq R_0$, $R_0$ small enough so that $V_N V(D_p(R_0)) +v_0<\frac{V_MV_N}{2}$, if $h(\Omega_i,p, \rho)\leq \epsilon$, then $h(\Omega_i,p,\frac{ \rho}{2})=0$.

Using the $L^1$ convergence of $\Omega_i$ to $B_{x_N}(r_0)$, Lemma \ref{L1}, we take a sequence $\{ \rho_i\}_{i\in\N}$, $ \rho_i\leq R_0$, $\rho_i\rightarrow 0$ such that $V(\Omega\Delta B_{x_N}(r_0))< \rho_i^{n+m+1}$.

Suppose that  Lemma \ref{Hausdorff} is not true, so that there is some  $\delta$, $0<\delta <1$, such that there is a subsequence  $\{z_i\}_{i\in \N}\subset X$, such that 

\begin{equation}\label{subseq}
	z_i \in \partial \Omega_i \setminus (\partial B_{x_N}(r_0))_\delta
\end{equation}

We proceed by cases
\begin{description}

	\item[Case 1] \textit{ $z_i \in X\setminus B_{x_N}(r_0)$ for a subsequence of $\{z_i\}_{i \in \N}$.}

For $i$ large enough, by (\ref{subseq}), we have $B_{z_i}(\rho_i)\cap B_{x_N}(r_0)=\emptyset$, and then

$$V(\Omega_i \cap B_{z_i}(\rho_i)  )\leq V(\Omega_i\setminus B_{x_N}(r_0))\leq V(\Omega_i\Delta  B_{x_N}(r_0) )<\rho_i^{n+m+1}$$.

Hence, for $i$ big enough, $h(\Omega_i,z_i,\rho_i)=\frac{V(\Omega_i\cap B_{z_i}(\rho_i))}{\rho_i^{n+m}}<\rho_i< \epsilon$.

By Lemma \ref{densities}, $V(\Omega_i\cap B_{z_i}(\frac{\rho_i}{2}))=0$, which contradicts (\ref{subseq}).

\item [Case 2] \textit{$z_i \in B_{x_N}(r_0)$ for a subsequence of $\{z_i\}_{i \in \N}$}.

For $i$ big enough $B_{z_i}(\rho_i)\subset B_{x_N}(r_0)$. This implies
$$V(B_{z_i}(\rho_i)\setminus \Omega_i)\leq  V(B_{x_N}(r_0)\setminus \Omega_i)\leq V(\Omega_i \Delta B_{x_N}(r_0) ) <\rho_i^{n+m+1}$$

Hence, there is some $i_0$ such that for $i>i_0$,

$$h(\Omega_i,z_i,\rho_i)=\frac{ V(B_{z_i}(\rho_i) \setminus \Omega_i) }{\rho_i^{n+m}}<\rho_i< \epsilon$$.

It follows from  Lemma \ref{densities} that $V(B_{z_i}(\frac{\rho_i}{2})\setminus \Omega_i)=0$, for $i>i_0$, which contradicts (\ref{subseq}).
 
\end{description}

\end{proof}

We now discuss the $O(m)$ stability of the boundary of cylinders for big $\lambda$. In the following, we will consider the orthogonal group $O(m)$ acting on $(M\times N, \lambda^{2n} g_0+\lambda^{-2m}h$ through the second factor. 

It is well known that the boundary $\Sigma$ of a regular isoperimetric domain is not only of constant mean curvature but also a stable hypersurface, in the sense of volume preservation under perturbation. This is, the second derivative of the area is non-negative for a variation of $\Sigma$  by hypersurfaces enclosing the same volume. More precisely, for all Lipschitz functions $u:\Sigma \rightarrow \re$ such that $\int_{\Sigma} u \  d\Sigma=0$, we have that
\begin{equation}\label{Q}
	Q(u)=\int_{\Sigma } 
	\left(|\nabla u|^2 -(Ric(\xi,\xi)+|A|^2)u^2\right)d\Sigma\geq 0
\end{equation}

\noindent where $\xi$ is a unit vector field normal to $\Sigma$, $d\Sigma$ is the element of volume, $Ric$ is the Ricci curvature, $|A|^2$ is the square of the norm of the second fundamental form $A$ of $\Sigma$.

Equation (\ref{Q}) is often called a stability condition and a hypersurface $\Sigma$ that satisfies it is called stable. If $\Sigma$ is $O(m)$ invariant one can consider an equivariant stability condition, this is, we say that $\Sigma$ is strictly $O(m)$ stable if there is some $c>0$ such that  
\begin{equation}\label{Qm}
	Q(u)=\int_{\Sigma } 
	\left(|\nabla u|^2 -(Ric(\xi,\xi)+|A|^2)u^2\right)d\Sigma\geq c \int_{\Sigma } 
u^2 \ d\Sigma
\end{equation}
\noindent for any $O(m)$ invariant  Lipschitz function $u:\Sigma \rightarrow \re$ such that $\int_{\Sigma} u \  d\Sigma=0$.

The cylinders $T_{\lambda}(B_{x_N}(r_0))\subset X_{\lambda}$ that we are considering, $r_0\in (0,\frac{\pi}{2})$, have constant mean curvature boundary $\Sigma^{\lambda}_{r_0} = \partial D^{\lambda}_{ x_N}(\lambda^n r_0)\times N_{\lambda}\subset X_{\lambda}$. Hence $\Sigma^{\lambda}_{r_0}$ is $O(m)$ invariant and  $|A|^2=\cot^2(r_0)\frac{m-1}{\lambda^{2n}}$. The unit normal $\xi$ to $\Sigma^{\lambda}_{r_0}$ is the normal to $\partial D_{ x_N}^{\lambda}(\lambda^n r_0)$ in $(S^m, \lambda^{2n}g_0)$. This implies $ Ric(\xi,\xi)=\frac{m-1}{\lambda^{2n}}$.

For fixed $r_0\in (0,\frac{\pi}{2})$, one can find $\lambda$ big enough so that the $O(m)$ invariant constant mean curvature hypersurface $\Sigma^{\lambda}_{r_0}$ is stable in the sense described above.

\begin{Lemma}\label{stable}
	Let  $r_0\in (0,\frac{\pi}{2})$. $\Sigma^{\lambda}_{r_0}$ is strictly stable if and only if
	
\begin{equation}
	\label{Qu}
	\lambda^{2(m+n)} >\frac{(m-1)}{\mu_1(N)} \csc^2(r_0)
\end{equation}
	
	where $\mu_1(N)$ is the first positive eigenvalue of the Laplacian of $N$.
\end{Lemma}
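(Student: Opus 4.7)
The plan is to reduce the equivariant stability inequality to a Poincar\'e-type inequality on $(N,h)$. First, I observe that since $\partial D^{\lambda}_{x_N}(\lambda^n r_0)$ is a single $O(m)$-orbit in $(S^m,\lambda^{2n}g_0)$, the $O(m)$-invariant Lipschitz functions on the hypersurface $\Sigma^{\lambda}_{r_0}=\partial D^{\lambda}_{x_N}(\lambda^n r_0)\times N_{\lambda}$ are exactly those that depend only on the $N$-variable. For such a $u=u(y)$, the constraint $\int_{\Sigma^{\lambda}_{r_0}}u\,d\Sigma=0$ reduces, after factoring out the volume of $\partial D^{\lambda}_{x_N}(\lambda^n r_0)$, to $\int_N u\,d\mathrm{vol}_h=0$, putting us exactly in the class to which the first nontrivial eigenvalue $\mu_1(N)$ applies.

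Next I compute the pointwise quantities in $Q(u)$. Since the ambient metric is a Riemannian product and $\xi$ is tangent to the $(S^m,\lambda^{2n}g_0)$-factor, one has $Ric(\xi,\xi)=(m-1)/\lambda^{2n}$; combined with the identity $|A|^2=(m-1)\cot^2(r_0)/\lambda^{2n}$ noted just before the Lemma, this gives
$$Ric(\xi,\xi)+|A|^2=\frac{m-1}{\lambda^{2n}}\csc^2(r_0).$$
For $u=u(y)$, the tangential gradient on $\Sigma^{\lambda}_{r_0}$ points only in the $N_{\lambda}$ direction, so $|\nabla u|^2=\lambda^{2m}|\nabla_h u|^2$; moreover $d\Sigma=d\mathrm{vol}_{\partial D^{\lambda}_{x_N}(\lambda^n r_0)}\wedge\lambda^{-mn}d\mathrm{vol}_h$. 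The volume of $\partial D^{\lambda}_{x_N}(\lambda^n r_0)$ and the factor $\lambda^{-mn}$ cancel between numerator and denominator of the Rayleigh quotient, leaving
$$\frac{Q(u)}{\int_{\Sigma^{\lambda}_{r_0}}u^2\,d\Sigma}=\lambda^{2m}\,\frac{\int_N|\nabla_h u|^2\,d\mathrm{vol}_h}{\int_N u^2\,d\mathrm{vol}_h}-\frac{m-1}{\lambda^{2n}}\csc^2(r_0).$$

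Finally, I minimize this expression over $O(m)$-invariant $u$ satisfying $\int_N u\,d\mathrm{vol}_h=0$. By the variational characterization of the first positive eigenvalue of the Laplacian on $(N,h)$, the ratio on the right is bounded below by $\mu_1(N)$, with equality attained on the corresponding eigenspace. Hence the infimum of the Rayleigh quotient equals $\lambda^{2m}\mu_1(N)-(m-1)\csc^2(r_0)/\lambda^{2n}$, and strict $O(m)$-stability in the sense of (\ref{Qm}) is equivalent to this infimum being strictly positive, i.e.\ to $\lambda^{2(m+n)}>(m-1)\csc^2(r_0)/\mu_1(N)$, as claimed. The step requiring the most care will be justifying the two product decompositions (of $Ric$ in the ambient metric and of $A$ on the product hypersurface) and checking that the infimum is genuinely attained so that strict inequality in the Lemma corresponds to strict stability; once those are in place, the rest is a separation-of-variables computation on the product.
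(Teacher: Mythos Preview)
Your proposal is correct and follows essentially the same approach as the paper: reduce $O(m)$-invariant functions on $\Sigma^{\lambda}_{r_0}$ to functions on $N$, compute $Ric(\xi,\xi)+|A|^2=(m-1)\csc^2(r_0)/\lambda^{2n}$, and apply the variational characterization of $\mu_1(N)$ to the resulting Rayleigh quotient. If anything, you are slightly more explicit than the paper in justifying the ``only if'' direction by noting that the infimum is attained on the first eigenspace, whereas the paper's computation displays only the lower bound for $Q(u)$ and then asserts the lemma.
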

\begin{proof}
$$	Q(u)=\int_{\Sigma^{\lambda}_{r_0} } 
\left(|\nabla u|^2 -(Ric(\xi,\xi)+|A|^2)u^2\right)d\Sigma^{\lambda}_{r_0}$$
$$ =V(S^{m-1}) (m-1) (\lambda^{2n} \sin^2(r_0))^{m-1}   \int_{N_{\lambda}} \left(|\nabla u|^2- \left(\frac{m-1}{\lambda^{2n}}+\cot^2(r_0)\frac{m-1}{\lambda^{2n}}\right)u^2\right) dN_{\lambda}$$

$$\geq V(S^{m-1}) (m-1)(\lambda^{2n} \sin^2(r_0))^{m-1} \left(\mu_1(N_{\lambda})-\csc^2(r_0)\frac{(m-1)}{\lambda^{2n}}\right) \int_{N_{\lambda}} u^2d{N_{\lambda}}$$
$$ =\left(\mu_1(N) \lambda^{2m} -\csc^2(r_0)\frac{m-1}{\lambda^{2n}}\right) \int_{\Sigma^{\lambda}_{r_0}} u^2d\Sigma^{\lambda}_{r_0}$$

\noindent where $V(S^{m-1})$ is  the volume of the $(m-1)-sphere$ with the round metric and radius 1. Hence the Lemma is satisfied. 
\end{proof}

 Lemma \ref{stable} implies that for fixed $v_0$, the boundary of the cylinder  $T_{\lambda}(B_{x_N}(r_0))$ is $O(m)$ strictly stable for $\lambda$ big enough. Stability, in turn, implies that among regions with the same volume, the stable hypersurface minimizes area.  We formally state this result due to White  \cite{White} and Grosse-Braukmann \cite{GrosseBrau} for $O(m)$ invariant perturbations (see also \cite{RiVer}, Theorem 3.2).

\begin{Theorem}\label{Thm3}(\cite{White} and \cite{GrosseBrau})
	Let $B=B_{x_N}^{\lambda}( \lambda^n  r_0)\subset X_{\lambda}$ be a cylinder of volume $v_0$ with a strictly $O(m)$ stable boundary $\Sigma=\partial B$. Then there is some $\delta>0$ such that if $K\subset X_{\lambda}$ is an $O(m)$ invariant closed region of volume $v_0$ and $\partial K\subset (\Sigma)_{\delta}$, then either $A(\partial K)>A(\partial B)$ or $K=B$.

\end{Theorem}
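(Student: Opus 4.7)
I would argue by contradiction using the second variation of area in normal-graph coordinates over $\Sigma$. Suppose no such $\delta$ exists; then there is a sequence $\delta_i\downarrow 0$ together with $O(m)$-invariant finite-perimeter sets $K_i\subset X_\lambda$ of volume $v_0$ with $\partial K_i\subset (\Sigma)_{\delta_i}$, $K_i\neq B$, and $A(\partial K_i)\leq A(\partial B)$.

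The first step is to reduce each $K_i$ to a smooth normal graph over $\Sigma$. Fix $\delta^\ast>0$ small enough that the normal exponential map is a diffeomorphism $\Sigma\times(-\delta^\ast,\delta^\ast)\to (\Sigma)_{\delta^\ast}$, and replace each $K_i$ by a minimizer $\tilde K_i$ of $A(\partial\cdot)$ among $O(m)$-invariant finite-perimeter competitors of volume $v_0$ whose essential boundary lies in $(\Sigma)_{2\delta_i}$. Such a $\tilde K_i$ exists by BV compactness, still satisfies $A(\partial\tilde K_i)\leq A(\partial B)$, and by Theorem \ref{existence} has smooth CMC essential boundary away from a closed singular set of codimension $\geq 7$. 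Since $\partial\tilde K_i$ Hausdorff-converges to $\Sigma$, Allard regularity promotes this to smooth convergence, and $\partial\tilde K_i$ becomes the normal graph of an $O(m)$-invariant function $u_i:\Sigma\to\re$ with $\|u_i\|_{C^{2,\alpha}(\Sigma)}\to 0$.

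Given the graphical representation, I would Taylor-expand area and volume. With $H$ the constant mean curvature of $\Sigma$ and $Q$ the quadratic form from (\ref{Qm}), the standard formulas yield
\[V(\tilde K_i)-V(B)=\int_{\Sigma} u_i\, d\Sigma+R^V_i,\qquad A(\partial\tilde K_i)-A(\partial B)=H\!\int_{\Sigma} u_i\, d\Sigma+\tfrac{1}{2}Q(u_i)+R^A_i,\]
with $|R^V_i|+|R^A_i|=O(\|u_i\|_{C^1}\|u_i\|_{L^2}^2)$. Decompose $u_i=c_i+\tilde u_i$ with $\int_{\Sigma}\tilde u_i\,d\Sigma=0$. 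The volume constraint $V(\tilde K_i)=V(B)$ forces $c_i\,A(\Sigma)=-R^V_i$, so $c_i=O(\|u_i\|_{C^1}\|u_i\|_{L^2}^2)$ and the $H\int u_i$ term is absorbed into the remainder, leaving
\[A(\partial\tilde K_i)-A(\partial B)=\tfrac{1}{2}Q(\tilde u_i)+o(\|\tilde u_i\|_{L^2}^2).\]
Since $\tilde u_i$ is $O(m)$-invariant with zero mean, strict stability (\ref{Qm}) gives $Q(\tilde u_i)\geq c\|\tilde u_i\|_{L^2}^2$. Hence $A(\partial\tilde K_i)-A(\partial B)\geq (c/4)\|\tilde u_i\|_{L^2}^2$ for $i$ large; combined with $A(\partial\tilde K_i)\leq A(\partial B)$ this forces $\tilde u_i\equiv 0$, the volume constraint then forces $c_i=0$, so $\tilde K_i=B$. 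Because $\tilde K_i$ minimizes area in its class, the same conclusion transfers to $K_i$, contradicting $K_i\neq B$ together with $A(\partial K_i)\leq A(\partial B)$.

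\textbf{Main obstacle.} The delicate step is the first one: promoting Hausdorff control of $\partial K_i$ to uniform $C^{2,\alpha}$ smallness of a normal graph. One must verify that the constrained minimizer $\tilde K_i$ does not touch the artificial obstacle $\partial(\Sigma)_{2\delta_i}$ so that it is genuinely CMC and Allard applies, and in dimensions $m+n>8$ one must separately rule out singularities of $\partial\tilde K_i$ near the smooth sheet $\Sigma$ via a tangent-cone argument. The $O(m)$-invariance is essential at two points: it reduces the perturbation to symmetric functions on $\Sigma$ and activates the equivariant strict stability (\ref{Qm}), which would otherwise fail because of translation-type Jacobi fields coming from rigid motions of $N$.
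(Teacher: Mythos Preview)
Your argument is correct and follows the standard route to this result. The paper itself does not present a proof in the compiled text: Theorem~\ref{Thm3} is stated with attribution to White and Grosse--Brauckmann (and to Ritor\'e--Vernadakis, Theorem~3.2). The \LaTeX\ source does contain a commented-out proof sketch, and it is worth noting how it differs from yours. Rather than handling the volume constraint by decomposing $u_i=c_i+\tilde u_i$ and absorbing the constant via the volume identity, the sketch passes to the penalized functional
\[
F_C \;=\; \text{area} \;+\; H\cdot\text{vol} \;+\; \tfrac{C}{2}\,(\text{vol}-v_0)^2,
\]
observes (in the spirit of Grosse--Brauckmann) that strict $O(m)$-stability of $\Sigma$ for the constrained problem yields strict positivity of $\delta^2 F_C$ on \emph{all} $O(m)$-invariant variations once $C$ is large, and then invokes White's strong-uniqueness theorem directly. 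The penalization trick buys a cleaner endgame: no mean-zero decomposition, no separate bookkeeping of the $H\!\int u_i$ term or the volume remainder $R^V_i$. Your direct Taylor expansion is more self-contained and makes the role of the constant $c$ in \eqref{Qm} transparent. Both routes rest on the same regularity step---upgrading Hausdorff closeness of $\partial\tilde K_i$ to $\Sigma$ into $C^{1,\alpha}$ (hence $C^{2,\alpha}$) graphical convergence via Allard---which you correctly identify as the delicate point, and both need the $O(m)$-symmetry in exactly the place you say: to activate the strict inequality in \eqref{Qm}.
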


We are now ready to prove Theorem \ref{Thm2}

\begin{proof}(of Theorem \ref{Thm2})

Let  $r_0\in (0,\frac{\pi}{2})$ being such that the cylinder $B_{x_N}(r_0)\subset X$, has volume $V(B_{x_N}(r_0))=v_0$. Let $\lambda_0>0$ be such that $B^{\lambda_0}=T_{\lambda_0}(B_{x_N}(r_0))$ is a cylinder with strictly $O(m)$ stable boundary, $\lambda_0$ exists by  Lemma \ref{stable}. Let $\{K^{\lambda_i}\}_{i \in \N}$,  be a sequence, with $\lambda_i$ increasing, $\lambda_i>\lambda_0$, such that $\lim_{i \rightarrow \infty} \lambda_i=\infty$, and with each $K^{\lambda_i}$  being a symmetrized isoperimetric region in $X_{\lambda_i}$ of volume $v_0$. 

Let $\Omega_i=T^{-1}_{\lambda_i}(K^{\lambda_i})\subset X$. By Lemmas \ref{L1} and \ref{Hausdorff}, the sets $\{\partial \Omega_i\}_{i\in \N}$ converge to  $\partial B_{x_N}(r_0)=T^{-1}_{\lambda_0}(\partial B^{\lambda_{0}})$ in Hausdorff distance. 
Let $E_i=T_{\lambda_0}(\Omega_i)$ for each $i \in \N$.
Hence $\{E_i\}_{i\in \N}$  also converges to $\partial B^{\lambda_{0}}$ in Hausdorff distance.  It follows from Theorem \ref{Thm3}, that $E_i=B^{\lambda_{0}}$, for $i$ big enough. This implies that for $\lambda_i$ big enough 
$$K^{\lambda_i}= T_{\lambda_i}(T_{\lambda_0}^{-1}(E_i))=T_{\lambda_i}(\Omega_i)=T_{\lambda_i}(D_{x_N}(r_0)\times N)=D_{ x_N}^{\lambda_i}(\lambda_i^n r_0)\times N_{\lambda_i}, $$

\noindent with $D_{x_N}^{\lambda_i}(\lambda_i^n r_0)$ a geodesic disk in $(S^m,\lambda_i^{2n} g_0)$ of radius $\lambda_i^n r_0$ and $N_{\lambda_i}=(N,\lambda_i^{-2m}h)$. This is, $D_{ x_N}^{\lambda_i}(\lambda_i^n r_0)\times N_{\lambda_i}$ is isoperimetric in $(X,G_{\lambda_i})$, for $\lambda_i$ big enough.
	
	By Lemma \ref{symm}, $D_{ x_N}^{\lambda_i}(\lambda_i^n r_1)\times N_{\lambda_i}$, with $r_1=\pi-r_0$, is also an isoperimetric region in $(X,G_{\lambda_i})$, since it has volume $V_NV_M-v_0$ and the same boundary area as $D_{ x_N}^{\lambda_i}(\lambda_i^n r_0)\times N_{\lambda_i}$. This covers the case $v_0\in (\frac{V_NV_M}{2}, V_NV_M)$.

	\end{proof}

\end{document}